\documentclass[11pt]{amsart}

\usepackage[T1]{fontenc}
\usepackage[utf8]{inputenc}
\usepackage[english]{babel}
\usepackage{amsmath}
\usepackage{amssymb}
\usepackage{amsfonts}
\usepackage{bbm}
\usepackage{bigints}
\usepackage{graphicx}
\usepackage{microtype}
\usepackage{cite}
\usepackage[hidelinks]{hyperref}

\hypersetup{
    pdftitle={Lagrange interpolation processes based on the zeros of anti-Gauss Jacobi polynomials},
    pdfauthor={Patricia Díaz de Alba, Luisa Fermo, Valerio Loi, Donatella Occorsio},
    pdfkeywords={Lagrange interpolation, orthogonal polynomials, Gauss-Jacobi quadrature rule, anti-Gaussian formula}
}

\newcommand{\dx}{\,dx}
\def\C{\mathcal{C}}
\numberwithin{equation}{section}

\newtheorem{theorem}{Theorem}[section]
\newtheorem{lemma}[theorem]{Lemma}

\newtheorem{proposition}[theorem]{Proposition}

\theoremstyle{remark}
\newtheorem{remark}[theorem]{Remark}
\theoremstyle{definition}
\newtheorem{test}{Test}

\graphicspath{{figures/}}

\title[Lagrange interpolation at anti-Gauss Jacobi nodes]{Lagrange interpolation processes based on the zeros of anti-Gauss Jacobi polynomials}

\author[Patricia D\'iaz de Alba]{Patricia D\'iaz de Alba\textsuperscript{1}}
\author[Luisa Fermo]{Luisa Fermo\textsuperscript{2}}
\author[Valerio Loi]{Valerio Loi\textsuperscript{3}}
\author[Donatella Occorsio]{Donatella Occorsio\textsuperscript{4}}

\dedicatory{\normalfont
\textsuperscript{1}Department of Mathematics and Computer Science, University of Cagliari,
Via Ospedale 72, 09124, Italy; \texttt{patricia.diazda@unica.it}\\
\textsuperscript{2}Department of Mathematics and Computer Science, University of Cagliari,
Via Ospedale 72, 09124, Italy; \texttt{fermo@unica.it}\\
\textsuperscript{3}Department of Science and High Technology, Insubria University,
Via Valleggio 11, Como, 22100, Italy; \texttt{vloi@studenti.uninsubria.it}\\
\textsuperscript{4}Department of Mathematics and Computer Science, University of Basilicata,
Viale dell'Ateneo Lucano 10, 85100 Potenza, Italy and Istituto per le Applicazioni
del Calcolo ``Mauro Picone'', Naples branch, C.N.R. National Research Council of Italy,
Via P. Castellino, 111, 80131 Napoli, Italy; \texttt{donatella.occorsio@unibas.it}
}

\date{}

\keywords{Lagrange interpolation, orthogonal polynomials, Gauss--Jacobi quadrature rule, anti-Gaussian formula}
\subjclass[2020]{41A05, 41A10, 42C05}

\begin{document}

\begin{abstract}
This paper introduces and investigates a new  Lagrange interpolation process based on the zeros of anti-Gauss Jacobi polynomials. Fundamental properties of anti-Gauss nodes, including their asymptotic distribution, are established, together with estimates for the associated polynomials and their derivatives. These results provide the basis for the construction of an interpolation process whose weighted Lebesgue constants exhibit logarithmic growth, ensuring optimal approximation properties. Compared with previously known interpolation schemes based on Jacobi nodes, the proposed process  achieves optimal Lebesgue constants for a shifted range of endpoint weight parameters, allowing the use of smaller endpoint weight exponents. Convergence estimates are established for functions in suitable weighted Sobolev spaces, and numerical experiments support the theoretical findings.
\end{abstract}

\maketitle

\section{Introduction}
In recent years, anti-Gaussian formulas have attracted considerable attention. Introduced by Dirk Laurie in 1996, these are $(n+1)$-point quadrature formulas with the same degree of exactness as the $(n)$-point Gauss formula \cite{Laurie1}. They are termed \emph{anti-Gaussian} because of a distinctive property of their associated error: when applied to polynomials of degree up to $2n+1$, the error has the same magnitude as that of the $(n)$-point Gauss rule, but the opposite sign. This property makes it possible to construct more accurate and efficient quadrature schemes, such as the averaged formula, obtained by taking the average of the Gauss and anti-Gaussian rules.

Several schemes involving anti-Gaussian formulas have been developed \cite{Alqahtani2025,Djukic2023,drst2016,ReichelSpalevic2021,ReichelSpalevic2026,spalevic2007,spalevic2017} and successfully applied in a variety of contexts \cite{djukic2025}. Among these applications, we mention the numerical solution of second-kind integral equations on bounded intervals and on the positive real half-line \cite{DFR2020,DFR2025,djukic2024,FRRS2024}, where such schemes have proved particularly effective, especially in multidimensional settings; see, for instance, \cite{DFR2025,DFR2026,djukic2024}.

The quadrature nodes  of anti-Gaussian rules are the zeros of a polynomial $\tilde{p}_{n+1}$, which, from now on, we will refer to as the \emph{anti-Gauss polynomial}. This polynomial is defined in terms of the orthogonal polynomial $p_n$ associated with the corresponding $(n)$-point Gauss rule, and several of its fundamental properties were established in the original paper by Laurie \cite{Laurie1}. For a large class of weight functions, the zeros lie in the open integration interval and interlace with those of the corresponding polynomial $p_n$.

Anti-Gauss Laguerre polynomials have recently been studied in \cite{FO2026}. The authors derive estimates for the zeros, as well as for the polynomials and their derivatives. They also introduce a modified Lagrange interpolation polynomial and prove its convergence in suitable weighted Sobolev spaces.

In this paper, we focus on the \emph{anti-Gauss Jacobi polynomials}, which are defined in terms of the orthogonal polynomials associated with the Jacobi weight  $v^{\alpha,\beta}(x)=(1-x)^\alpha (1+x)^\beta$, $\alpha,\beta>-1$. Our main goal is to introduce an optimal Lagrange  interpolation process in the uniform
norm, based on the zeros of these polynomials, i.e., one for which the corresponding Lebesgue constants have a logarithmic behavior in the uniform norm. Achieving this requires two key ingredients: the arc-sine distribution of the set of nodes and the uniform boundedness of the sequence of considered polynomials \cite{MO2001}. 
In this paper, we prove that the zeros of anti-Gauss Jacobi polynomials are arc-sine distributed whenever they are internal. Moreover, we show that in suitable weighted spaces $C_u$ equipped with a uniform metric, the sequence of anti-Gauss Jacobi polynomials $\tilde{p}_{n+1}$ is uniformly bounded, i.e., $\sup_n \|\tilde{p}_{n+1}\|_{C_u}<\infty$, under suitable relations of  the weights $v^{\alpha,\beta}$ and $u$.  As a consequence, we establish the optimality of the corresponding Lebesgue constants and derive convergence and error estimates for the proposed interpolation process. We remark that a similar result holds for Lagrange interpolation at the zeros of the Jacobi polynomials. However, the proposed interpolation scheme achieves the optimal growth of the Lebesgue constants for a different range of endpoint weight parameters, allowing smaller endpoint weight exponents and, in particular, including the unweighted case when $\alpha,\beta\leq 1/2$. As a consequence, the two interpolation processes complement each other, yielding optimal Lebesgue constant estimates over a wider range of the weight exponents  $\gamma,\delta$. We recall that the problem of constructing optimal interpolation processes has attracted considerable attention, and several optimal Lagrange interpolation processes have been developed in recent years; see, for instance, \cite{Mastroianni08, MO2001, MR1997} and the references therein. Indeed, Lagrange interpolation plays a central role in numerical analysis and approximation theory. Besides providing an effective tool for function approximation, it constitutes a fundamental ingredient in the construction of quadrature formulas, projection methods, and numerical schemes for differential and integral equations. Its widespread use stems from both its ease of implementation and its solid theoretical foundation.

The paper is organized as follows. Section~\ref{sec:preliminaries} introduces the function spaces and summarizes some well-known results on weighted polynomial approximation, Jacobi polynomials, and anti-Gauss Jacobi polynomials. Section~\ref{sec:newproperties} presents new results on the distribution of the zeros, together with estimates for the polynomials and their derivatives. In Section~\ref{sec:interpolation}, we introduce a Lagrange interpolation polynomial based on the zeros of the anti-Gauss Jacobi polynomials. Section~\ref{sec:tests} reports several numerical experiments illustrating the behavior of the Lebesgue constants and the convergence rates achieved in function approximation, and compares the results with those obtained by the corresponding Lagrange interpolation processes based on the zeros of the Jacobi polynomials. All proofs are collected in Section~\ref{sec:proofs}. Finally, Section~\ref{sec:conclusions} concludes the paper with some final remarks and discusses possible directions for future research.

\section{Preliminaries}\label{sec:preliminaries}
Here and in the sequel, $\C$ denotes a positive constant which may vary in different formulas. We write $\C =\C(a,b,\dots)$ to indicate that $\C$ depends on the parameters $a,b,\dots$, and $\C \neq \C(a,b,\dots)$ to state that it is independent of them. Moreover, if $A(n,f,x,\dots)$ and $B(n,f,x,\dots)$ are two positive functions depending on certain variables $n,f,x, \dots$, then we write $A \sim B$ if there exist a positive constant $\C \neq \C(n,f,x,\dots)$ such that the inequalities
$\C^{-1} B(n,f,x,\dots) \leq A(n,f,x,\dots) \leq \C B(n,f,x,\dots)$ are satisfied.
Moreover, for any $\rho,\sigma\in\mathbb{R}$, as usual, we set
$$v^{\rho,\sigma}(x):=(1-x)^\rho(1+x)^\sigma,\qquad  x\in (-1,1).$$
For any $z\in \mathbb{R}$ the notation  $(z)_n$ denotes  the Pochhammer symbol,  defined for $n \in \mathbb{N}$ by
$(z)_0:=1$, $\displaystyle (z)_n:=\prod_{k=1}^n (z+k-1)$. 

\subsection{Function spaces}
Let $u$ be the Jacobi weight defined by
\begin{equation}\label{u}
u(x)=v^{\gamma,\delta}(x),\qquad x\in[-1,1], \quad \gamma,\delta\ge0.
\end{equation}
We denote by $C_u$ the Banach space of functions that are locally continuous on $(-1,1)$ and satisfy
\begin{equation}\label{lim}
\lim_{x\rightarrow +1}f(x)u(x)=0,\quad \mbox{if }\gamma>0,
\qquad\mbox{and}\qquad
\lim_{x\rightarrow -1}f(x)u(x)=0,\quad \mbox{if }\delta>0,
\end{equation}
endowed with the norm
\[
\|f\|_{C_u}:=\|fu\|_\infty=\max_{x\in[-1,1]}|f(x)|u(x).
\]

The conditions in \eqref{lim} are necessary to guarantee that (see, for instance,~\cite{Mastroianni08})
\[
\lim_{n\rightarrow\infty}E_n(f)_u=0,\qquad \forall\, f\in C_u,
\]
where $\mathbb{P}_n$ denotes the space of algebraic polynomials of degree at most $n$, and
\begin{equation}\label{best}
E_n(f)_u:=\inf_{P\in\mathbb{P}_n}\|f-P\|_{C_u}
\end{equation}
is the error of best approximation of $f$ in the weighted norm.

Moreover, the space $C_u$ can be characterized in terms of the error of best approximation. Indeed, it is shown in~\cite{ditzian2012} that
\[
\lim_{n\to\infty}E_n(f)_u=0
\quad\Longleftrightarrow\quad
f\in C_u.
\]
For smoother functions, let us denote by $W_r(u)$ the Sobolev-type subspace of $C_u$ of order $r \in \mathbb{N}$, $r \geq 1$ defined as 
\begin{equation*}
W_r(u)=\left\{f \in C_u : f^{(r-1)} \in AC((-1,1)), \|f^{(r)}\varphi^r u\|_\infty < \infty \right\},
\end{equation*}
where $\varphi(x)=\sqrt{1-x^2}$ and $AC((-1,1))$ denotes the collection of all functions which are absolutely continuous on every closed subset of $(-1,1)$, equipped with  norm 
$$\|f\|_{W_r(u)}:=\|fu\|_\infty+\|f^{(r)}\varphi^r u\|_\infty.$$

The following Favard inequality provides an estimate of $E_n(f)_u$ for functions $f\in W_r(u)$
\begin{equation}\label{stimaWr}
E_n(f)_{u} \leq \frac{\C}{n^r} \|f\|_{W_r(u)},  \quad \forall f \in W_r(u),\quad \C\neq \C(n,f).
\end{equation}

\subsection{Jacobi polynomials}
Let $v^{\alpha,\beta}$ be a Jacobi weight  of exponents  $\alpha,\beta>-1,$ and let $\{p_n\}_{n=0}^\infty$ be the corresponding sequence of monic orthogonal
polynomials on $[-1,\,1]$,  i.e.,
\begin{equation}\label{scalar}
\langle p_n, p_m \rangle_{v^{\alpha,\beta}} =\int_{-1}^1 p_n(x) p_m(x) v^{\alpha,\beta}(x) \dx =
\begin{cases}
0, & m \neq n, \\
c_n, & m=n,
\end{cases}
\end{equation}
where  
\begin{equation}\label{cn}
c_n=\frac{2^{2n+\alpha+\beta+1}}{2n+\alpha+\beta+1} \cdot 
\frac{\Gamma(n+\alpha+1)\Gamma(n+\beta+1)}{n!\,\Gamma(n+\alpha+\beta+1)} \left[ \frac{n!}{(n+\alpha+\beta+1)_n}\right]^2,
\end{equation}
and $\Gamma$ is the Gamma function.

Such a sequence satisfies the following three-term recurrence relation~\cite{Gautschi04}
\begin{equation}\label{recurrence}
\begin{cases}
p_{-1}(x)=0, \quad p_0(x)=1, \\
p_{n+1}(x)=(x-a_n) p_n(x)- b_n p_{n-1}(x), \quad n=0,1,2,\ldots,
\end{cases}
\end{equation}
where the coefficients $a_n$ and $b_n$ are given by
\begin{align}
a_n&= \frac{\beta^2-\alpha^2}{(2n+\alpha+\beta)(2n+\alpha+\beta+2)}, 
&n \geq 0, \label{aj}\\
b_0 &= \frac{2^{\alpha+\beta+1} \Gamma(\alpha+1) \Gamma(\beta+1)}{\Gamma(\alpha+\beta+2)}, \label{b0} \\
b_n&= \frac{4n(n+\alpha)(n+\beta)(n+\alpha+\beta)}{(2n+\alpha+\beta)^2
((2n+\alpha+\beta)^2-1)}, &n \geq 1. \label{bj}
\end{align}
Moreover, the values of $p_n$ at the endpoints $\pm 1$ depend on the degree $n$ and the parameters $\alpha$ and $\beta$, and are explicitly given by~\cite{JMN}
\begin{equation}\label{valuespol}
p_n(-1)=\frac{(-2)^n (\beta+1)_n}{(n+\alpha+\beta+1)_n}, \quad p_n(1)=\frac{2^n (\alpha+1)_n}{(n+\alpha+\beta+1)_n}.
\end{equation}

Let us denote by $\{x_{n,k}\}_{k=1}^n$ the zeros of $p_n$ in increasing order
$$-1=:x_{n,0}<x_{n,1}<\dots<x_{n,n}<x_{n,n+1}:=1.$$
It is well known that they are arc-sine distributed, i.e., 
$$\theta_{n,k}-\theta_{n,k+1} \sim \frac{\C}{n}, \quad k=0,1,\dots,n,$$
where $x_{n,k}=\cos{\theta_{n,k}}$,  $\theta_{n,k} \in [0,\pi]$.   
In a nutshell, 
\begin{equation}\label{gaussdist}
\Delta x_{n,k}:=x_{n,k+1}-x_{n,k} \sim \frac{\sqrt{1-x_{n,k}^2}}{n}, \quad k=0,\dots,n.
\end{equation} 
Concerning the computation of the zeros $\{x_{n,k}\}_{k=1}^n$, in 1962, Wilf~\cite{wilf1962} found that they are the eigenvalues of the following matrix, known as the Jacobi matrix, 
\begin{equation}\label{Jn}
J_n=
\begin{bmatrix}
a_0 & \sqrt{b_1} \\
\sqrt{b_1} & a_1 & \sqrt{b_2} \\
& \sqrt{b_2} & a_2 & \ddots \\ 
& & \ddots & \ddots & \sqrt{b_{n-1}} \\
& & & \sqrt{b_{n-1}} &  a_{n-1} \\
\end{bmatrix},
\end{equation}
where the entries $a_j$ and $b_j$ are defined in \eqref{aj} and \eqref{bj}, respectively.

For our aims, let us also introduce the sequence $\{p_n(v^{\alpha,\beta})\}_{n=0}^\infty$ of orthonormal polynomials  corresponding to the weight $v^{\alpha,\beta}$ having  positive leading coefficients, i.e.,
$$p_n(v^{\alpha,\beta},x) = \gamma_n x^n + \textrm{lower degree terms}, \quad \gamma_n>0,$$
 with 
\begin{equation*}
p_{n}(v^{\alpha,\beta},x) = \frac{p_n(x)}{\sqrt{c_n}},
\end{equation*}
where $c_n$  given in \eqref{cn}. Such polynomials satisfy the following~\cite{Nevai1979,szego1975,mr2009} 
\begin{equation*}
|p_{n}(v^{\alpha,\beta},x)| \sim \dfrac{\C}{\left(\sqrt{1-x}+\frac{1}{n}\right)^{\alpha+\frac{1}{2}}\left(\sqrt{1+x}+\frac{1}{n}\right)^{\beta+\frac{1}{2}}}, \qquad |x|\leq 1,
\end{equation*}
from which one can deduce (see, for instance,~\cite{szego1975,Nevai1979,Mastroianni08})
\begin{equation}\label{stimapol}
|p_{n}(v^{\alpha,\beta},x)| \sim \C 
\begin{cases}
 \dfrac{1}{\sqrt{v^{\alpha,\beta}(x) \varphi(x)}}  & x \in I_n:=\left[-1+\C n^{-2},1-\C n^{-2}\right], \\
 n^{\beta+\frac{1}{2}}  &x \in  \left[-1, -1+\C n^{-2} \right],\\
 n^{\alpha+\frac{1}{2}} & x \in \left[ 1-\C n^{-2},1\right],
\end{cases}
\end{equation}
where $\varphi(x)=\sqrt{1-x^2}$ and $\C(\alpha,\beta)=\C \neq \C(n)$.

Finally, for our aims let us recall the so-called Christoffel numbers~\cite{Mastroianni08}
\begin{equation} \label{lambda}
\lambda_{n,k}=
\left[ \sum_{k=0}^{n-1} [p_k(v^{\alpha,\beta},x_{n,k})]^2 \right]^{-1} = \left[ K_{n-1}(v^{\alpha,\beta},x_{n,k},x_{n,k}) \right]^{-1}
\end{equation}
where $K_{n-1}(v^{\alpha,\beta})$ is the Christoffel-Darboux kernel defined as 
\begin{equation}\label{ChrisDarb}
K_{n-1}(v^{\alpha,\beta},x,y):=
\sum_{j=0}^{n-1}p_j(v^{\alpha,\beta},x)p_j(v^{\alpha,\beta},y).
\end{equation}

Concerning the computation of the Christoffel numbers, Wilf~\cite{wilf1962}  proved that  they are strictly related to the matrix $J_n$. In fact, $\lambda_{n,k}=b_0 v^2_{k,1}$, where $b_0$ is given in \eqref{b0} and $v_{k,1}$ is the first component  of the normalized eigenvector corresponding to the eigenvalue $x_{n,k}$.

Furthermore, let us mention the following estimate~\cite{Nevai1979,Mastroianni08}
\begin{equation}\label{estimatelambda}
\lambda_{n,k} \sim v^{\alpha,\beta}(x_{n,k}) \frac{\sqrt{1-x_{n,k}^2}}{n}, \qquad k=1,\dots,n,
\end{equation}
where the constants in $\sim$ do not depend on $n$ and $k$.

\subsection{Anti-Gauss Jacobi polynomials}
In~\cite{Laurie1}, Laurie introduced the sequence of polynomials
$\{\tilde{p}_n\}_{n}$
defined by
\begin{equation}\label{recurrence_anti2}
\tilde{p}_{n+1}(x)=p_{n+1}(x)- b_n p_{n-1}(x), \qquad n=0,1,\dots,
\end{equation}
where $p_n$ denotes the $n$th orthogonal polynomial defined by~\eqref{recurrence} and $b_n$ is given by~\eqref{bj}. These polynomials were introduced to construct a quadrature scheme for estimating the error $e_n(f)$ of the $n$-point Gauss--Jacobi quadrature formula $G_n(f)$
\begin{equation*}
\int_{-1}^1 f(x) v^{\alpha,\beta}(x)  dx= \sum_{k=1}^{n} \lambda_{n,k} f(x_{n,k}) + e_{n}(f)=:G_n(f)+e_n(f),
\end{equation*} 
where the Christoffel number $\lambda_{n,k}$ are defined in \eqref{lambda}.
He then introduced a new quadrature formula based on $n+1$ nodes, \begin{equation}\label{anti-Gauss} 
\tilde{G}_{n+1}(f) = \sum_{k=1}^{n+1} \tilde{\lambda}_{n+1,k} f(\tilde{x}_{n+1,k}), 
\end{equation} 
whose error, denoted by $\tilde{e}_{n+1}(f)$, has the same magnitude as the Gauss error $e_n(f)$ but opposite sign,  for all polynomials of degree at most $2n+1$, i.e., 
\begin{equation}\label{error} \tilde{e}_{n+1}(f) = -e_n(f), \quad \text{for all } f \in \mathbb{P}_{2n+1}. 
\end{equation} 
In view of this property, he called the new formula the \emph{anti-Gaussian rule}. Moreover, he proved that relation \eqref{error} holds if and only if the quadrature nodes $\{\tilde{x}_{n+1,k}\}_{k=1}^{n+1}$ in \eqref{anti-Gauss} are the zeros of the polynomial $\tilde{p}_{n+1}$ defined by the recurrence relation \eqref{recurrence_anti2}. Motivated by their role in the anti-Gaussian rule, we shall refer to these polynomials as the \emph{anti-Gauss Jacobi polynomials}.

Several properties of the zeros $\{\tilde{x}_{n+1,k}\}_{k=1}^{n+1}$ are well established.
First, the zeros of $\tilde{p}_{n+1}$ coincide with the eigenvalues of the modified Jacobi matrix of order $n+1$,
\begin{equation}\label{Jn+1}
\tilde{J}_{n+1}=
\begin{bmatrix}
J_n & \sqrt{2 b_n}\mathbf{e}_n \\
\sqrt{2 b_n}\mathbf{e}_n^T & a_n
\end{bmatrix},
\end{equation}
where $\mathbf{e}_n=(0,0,\dots,1)^T\in\mathbb{R}^n$, while $J_n$, $a_n$, and $b_n$ are defined in
\eqref{Jn}, \eqref{aj}, and \eqref{bj}, respectively.

Furthermore, the zeros of $\tilde{p}_{n+1}$ interlace with the Gauss nodes
$\{x_{n,k}\}_{k=1}^n$, namely,
\begin{equation}\label{interlacing}
\tilde{x}_{n+1,1}<x_{n,1}<\tilde{x}_{n+1,2}<\cdots<\tilde{x}_{n+1,n}<x_{n,n}<\tilde{x}_{n+1,n+1}.
\end{equation}
As a consequence, the interior anti-Gauss nodes
$\tilde{x}_{n+1,k}$, $k=2,\ldots,n$, all lie to the interval $(-1,1)$, whereas only the first and the last nodes may lie outside it.  
More specifically, Dirk Laurie~\cite[Theorem 4]{Laurie1} (see also its reformulation in~\cite{DFR2026}) proved that the smallest anti-Gauss node,
$\tilde{x}_{n+1,1}$, belongs to the interval $[-1,1]$ if and only if
\begin{equation}\label{cond1}
(2\beta+1) n^2+(2\beta+1) (\alpha+\beta+1)n+\frac{1}{2}(\beta+1)(\alpha+\beta)(\alpha+\beta+1) \geq 0,
\end{equation}
whereas the largest node, $\tilde{x}_{n+1,n+1}$, belongs to $[-1,1]$ if and only if
\begin{equation}\label{cond2}
(2\alpha+1) n^2+(2\alpha+1) (\alpha+\beta+1)n+\frac{1}{2}(\alpha+1)(\alpha+\beta)(\alpha+\beta+1) \geq 0.
\end{equation} 

Moreover, the node $\tilde{x}_{n+1,1}$ coincides with the endpoint $-1$ if and only if $\beta=-1/2$ and $\alpha= \pm 1/2$, while the node $\tilde{x}_{n+1,n+1}$ coincides with the endpoint $1$ if and only if $\alpha=-1/2$ and $\beta=\pm 1/2$. 
Finally, in the Gegenbauer or ultraspherical case, namely when $\alpha=\beta$, all the anti-Gauss nodes lie in the open interval $(-1,1)$ if and only if $\alpha=\beta > -1/2$.

Throughout the paper, whenever interpolation on $C_u$ is considered, we assume that the parameters $\alpha$ and $\beta$ satisfy the strict inequalities in \eqref{cond1} and \eqref{cond2}, so that all the anti-Gauss nodes lie in $(-1,1)$.

Combining \eqref{recurrence_anti2} with the recurrence relation \eqref{recurrence}, we obtain
\begin{equation}\label{recurrence_anti1}
\tilde{p}_{n+1}(x)=2p_{n+1}(x)+(a_n-x)p_n(x).
\end{equation}
Evaluating \eqref{recurrence_anti1} at $x=\pm1$ and using \eqref{valuespol}, we find
\begin{align*}
\tilde{p}_{n+1}(-1)&=
\left[-\frac{4(n+\beta+1)(n+\alpha+\beta+1)}{(2n+\alpha+\beta+1)(2n+\alpha+\beta+2)}+a_n+1\right]p_n(-1),\\
\tilde{p}_{n+1}(1)&=
\left[\frac{4(n+\alpha+1)(n+\alpha+\beta+1)}{(2n+\alpha+\beta+1)(2n+\alpha+\beta+2)}+a_n-1\right]p_n(1),
\end{align*}
that is, 
\begin{align*}
\frac{\tilde{p}_{n+1}(-1)}{p_n(-1)}&=
-\frac{ \frac{\beta}{n}+\frac{1}{2n} +O(\frac{1}{n^2})}{1+\frac{2\alpha+2\beta+3}{2n}+\frac{(\alpha+\beta+1)(\alpha+\beta+2)}{4n^2} }+a_n ,\\
\frac{\tilde{p}_{n+1}(1)}{p_n(1)}&=
\frac{ \frac{\alpha}{n}+\frac{1}{2n} +O(\frac{1}{n^2})}{1+\frac{2\alpha+2\beta+3}{2n}+\frac{(\alpha+\beta+1)(\alpha+\beta+2)}{4n^2} }+a_n.
\end{align*}

Consequently, since $a_n=O(n^{-2})$, it follows that
\begin{equation}\label{valuepol_anti2}
\frac{\tilde{p}_{n+1}(-1)}{p_n(-1)}=-\frac{\beta+\frac12}{n}+O(n^{-2}), \qquad \frac{\tilde{p}_{n+1}(1)}{p_n(1)}=\frac{\alpha+\frac12}{n}+O(n^{-2}).
\end{equation}

\noindent Moreover,
\begin{align*}
\tilde{p}_{n+1}(x)& = 2 \sqrt{c_{n+1}} p_{n+1}(v^{\alpha,\beta},x)+ \sqrt{c_n} (a_n-x) p_{n}(v^{\alpha,\beta}, x)\\
& = \sqrt{c_n} \left[2\rho_n \,  p_{n+1}(v^{\alpha,\beta},x)+(a_n-x) p_{n}(v^{\alpha,\beta}, x)\right],
\end{align*}
where
\begin{equation*}
\rho_n:=\sqrt{\frac{c_{n+1}}{c_n}}=\sqrt{b_{n+1}}\sim \frac12 .
\end{equation*}
The identity $c_{n+1}=b_{n+1}c_n$ is the standard norm relation for monic
orthogonal polynomials.  Hence the normalized anti-Gauss polynomials have the form
\begin{equation*}
\frac{\tilde p_{n+1}(x)}{\sqrt{c_n}}
=
2\rho_n p_{n+1}(v^{\alpha,\beta},x)+(a_n-x)p_n(v^{\alpha,\beta},x),
\qquad 2\rho_n=1+O(n^{-2}).
\end{equation*}
This is the form used below for the local estimates. 
Furthermore, by virtue of~\cite[Section~5.1]{JMN},
\begin{equation}\label{derpn}
p'_n(v^{\alpha,\beta},x)
=\sqrt{n(n+\alpha+\beta+1)}\,
p_{n-1}(v^{\alpha+1,\beta+1},x),
\end{equation}
it follows from \eqref{recurrence_anti1} that
\begin{align}\label{der_anti-pol}
\frac{|\tilde p'_{n+1}(x)|}{\sqrt{c_n}}
&\le \C n\Bigl(
|p_n(v^{\alpha+1,\beta+1},x)|
+|p_{n-1}(v^{\alpha+1,\beta+1},x)|
\Bigr)
+|p_n(v^{\alpha,\beta},x)|,
\end{align}
where the constant $\C$ is independent of $n$ and $x$.

We conclude this section by recalling that the coefficients  of the anti-Gaussian rule, $\{\tilde{\lambda}_{n+1,k}\}_{k=1}^{n+1}$, can also be characterized in terms of the eigenvalue problem \eqref{Jn+1}. In fact,  
they are given by
\begin{equation}\label{lambdatilde}
\tilde{\lambda}_{n+1,k}=b_0 \, \tilde{v}^2_{k,1},
\end{equation}
where $b_0$ is defined by \eqref{b0} and $\tilde{v}_{k,1}$ is the first
component of the eigenvector associated to the eigenvalue $\tilde{x}_{n+1,k}$. By \eqref{lambdatilde}, one can deduce that the weights
$\{\tilde{\lambda}_{n+1,k}\}_{k=1}^{n+1}$ are strictly positive. Moreover from~\cite{Notaris2018}, 
denoting by  $\| \cdot\|_{v^{\alpha,\beta}}$  the norm induced by the scalar product \eqref{scalar}, one has the following identity
\begin{equation*}
\tilde{\lambda}_{n+1,k}= \frac{2\|p_n(x)\|^2_{v^{\alpha,\beta}}}{p_n(\tilde{x}_{n+1,k}) \, \tilde{p}'_{n+1}(\tilde{x}_{n+1,k})},
\end{equation*}
equivalent to 
\begin{equation}\label{lambatilde2}
\tilde{\lambda}_{n+1,k}=\frac{2\sqrt{c_n}}{p_n(v^{\alpha,\beta},\tilde{x}_{n+1,k}) \, \tilde{p}'_{n+1}(\tilde{x}_{n+1,k})}.
\end{equation}

\section{Some new properties and estimates of anti-Gauss Jacobi polynomials}\label{sec:newproperties}
From now on, to simplify the notation, we will set 
\begin{equation*}
\begin{aligned}
x_k&:=x_{n,k},  &&\lambda_{k}:={\lambda}_{n,k},  &&k=1,2,\dots,n, \\
\tilde{x}_{k}&:=\tilde{x}_{n+1,k}, &&\tilde{\lambda} _{k}:=\tilde{\lambda}_{n+1,k},   &&k=1,2,\dots,n+1.
\end{aligned}
\end{equation*}

In the next proposition, we investigate the distances of the first and last zeros of $\tilde p_{n+1}$ from the endpoints $-1$ and $1$, respectively, under the assumption that  $\tilde{x}_1,\tilde{x}_{n+1}\in(-1,1)$, i.e., when conditions \eqref{cond1} and \eqref{cond2} are satisfied strictly. The obtained estimates are analogous to those for Jacobi polynomials, where the distances of the extreme zeros of $p_n(v^{\alpha,\beta})$ from the endpoints $\pm1$ are of order $n^{-2}$
\medskip
\begin{proposition}\label{prop:nodes1}
Assume that the parameters $\alpha$ and $\beta$ satisfy the strict  inequalities   in \eqref{cond1} and \eqref{cond2}. Then, $$ \tilde{x}_k \in I_n, \qquad \forall k=1,\dots,n+1, \quad \textit{where} \quad I_n=\left[-1+\C n^{-2},1-\C n^{-2}\right].$$
\end{proposition}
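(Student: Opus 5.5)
By the interlacing \eqref{interlacing}, $\tilde x_2,\dots,\tilde x_n$ lie between $x_{n,1}$ and $x_{n,n}$. The extreme Jacobi zeros satisfy $1+x_{n,1}\sim n^{-2}$ and $1-x_{n,n}\sim n^{-2}$; this follows, for instance, from \eqref{gaussdist} with $k=0$ and $k=n$. So the interior anti-Gauss nodes already lie in $I_n$ for a suitable constant $\C$, and it remains to treat $\tilde x_1$ and $\tilde x_{n+1}$. By the symmetry $x\mapsto -x$, which exchanges $\alpha$ and $\beta$, it suffices to show $1-\tilde x_{n+1}\ge \C n^{-2}$ under the strict inequality in \eqref{cond2}.

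My plan is to work in the local variable $x=\cos(t/n)$ near $x=1$ and use the Mehler--Heine asymptotics. The normalized Jacobi polynomials satisfy
$$
p_n(v^{\alpha,\beta},\cos(t/n))\,(\text{endpoint normalization}) \to (t/2)^{-\alpha}J_\alpha(t)
$$
uniformly for $t$ in compact sets. I will divide \eqref{recurrence_anti1} by $p_n(1)$ and write
$$
\frac{\tilde p_{n+1}(x)}{p_n(1)}=2\frac{p_{n+1}(x)}{p_n(1)}-(1-x)\frac{p_n(x)}{p_n(1)}-(1-a_n)\frac{p_n(x)}{p_n(1)}.
$$
The aim is an expansion, uniform for $t\in[0,T]$, of the form
$$
\frac{n\,\tilde p_{n+1}(\cos(t/n))}{p_n(1)}\to F_\alpha(t),
$$
with $F_\alpha(0)=\alpha+\tfrac12$, which is consistent with \eqref{valuepol_anti2}. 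Heuristically, $2p_{n+1}-p_n$ is a difference quotient in $n$. Combining the $t/n$ scaling of the $(n+1)$-th polynomial with the Bessel limit, I expect to get
$$
F_\alpha(t)=c\,\Bigl[(\alpha+\tfrac12)\,g(t)+t\,g'(t)\Bigr], \qquad g(t)=(t/2)^{-\alpha}J_\alpha(t),
$$
up to an explicit additive term, possibly with $g$ replaced by a related Bessel-type function. Once the limit is in hand, the argument is short. $F_\alpha$ is continuous with $F_\alpha(0)>0$, so there is a $\tau>0$ such that $F_\alpha>0$ on $[0,\tau]$. Then, for $n$ large, $\tilde p_{n+1}$ has no zero in $[\cos(\tau/n),1]$, which gives $1-\tilde x_{n+1}\ge 1-\cos(\tau/n)\ge \C n^{-2}$. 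For the finitely many small $n$ the claim is immediate, since all nodes lie in $(-1,1)$ by the strict inequality in \eqref{cond2}.

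The main obstacle is making the difference-in-$n$ limit rigorous. The two terms $2p_{n+1}/p_n(1)$ and $p_n/p_n(1)$ are each of size $1$, while their difference is of size $1/n$. The plain Mehler--Heine limit therefore does not suffice, and I need the next-order term in $n$, uniformly in $t\in[0,\tau]$.

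To avoid that, I would first try a cruder route that only needs the sign of $\tilde p_{n+1}$ near $1$. Set $y=1-x\in[0,\tau n^{-2}]$ and use Taylor's formula:
$$
\tilde p_{n+1}(x)=\tilde p_{n+1}(1)-y\,\tilde p_{n+1}'(\xi).
$$
For the derivative, bound $|\tilde p'_{n+1}(\xi)|$ via \eqref{der_anti-pol} and \eqref{stimapol} by $\C\sqrt{c_n}\,n\cdot n^{\alpha+3/2}$ near $1$. For the endpoint value, \eqref{valuepol_anti2} together with $p_n(1)=\sqrt{c_n}\,p_n(v^{\alpha,\beta},1)\sim\sqrt{c_n}\,n^{\alpha+1/2}$ gives
$$
\tilde p_{n+1}(1)\ge \C\sqrt{c_n}\,n^{\alpha-1/2}.
$$
Hence
$$
\tilde p_{n+1}(x)\ge \sqrt{c_n}\,n^{\alpha-1/2}\bigl(\C_1-\C_2\tau\bigr)>0
$$
for $\tau$ small. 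Both orders of magnitude match, so this elementary Taylor argument should settle the endpoint estimate directly without any Bessel asymptotics. The only care needed is to check that the constants in \eqref{stimapol} remain valid for the shifted parameters $\alpha+1,\beta+1$ on $[1-\C n^{-2},1]$.
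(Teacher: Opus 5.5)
Your fallback Taylor argument has the same skeleton as the paper's proof, which uses the mean value theorem in the form $1-\tilde x_{n+1}=|\tilde p_{n+1}(1)|/|\tilde p'_{n+1}(\zeta)|$. However, the claim that ``both orders of magnitude match'' is false, and this is exactly where the difficulty lies. From \eqref{der_anti-pol} and \eqref{stimapol} with parameters $\alpha+1,\beta+1$ you get $|\tilde p'_{n+1}(\xi)|\le\C\sqrt{c_n}\,n^{\alpha+5/2}$. So for $y\le\tau n^{-2}$ the correction term satisfies $y|\tilde p'_{n+1}(\xi)|\le\C\tau\sqrt{c_n}\,n^{\alpha+1/2}$. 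This is a factor $n$ larger than $\tilde p_{n+1}(1)\sim\sqrt{c_n}\,n^{\alpha-1/2}$. What your estimate actually yields is $\tilde p_{n+1}(x)\ge\sqrt{c_n}\,n^{\alpha-1/2}(\C_1-\C_2\tau n)$, i.e.\ only $1-\tilde x_{n+1}\ge\C n^{-3}$. Checking constants for the shifted parameters cannot recover a lost power of $n$.

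The bound \eqref{der_anti-pol} treats $2p'_{n+1}$ and $(a_n-x)p'_n$ separately. Each of these really is of size $\sqrt{c_n}\,n^{\alpha+5/2}$ near $1$, so the bound misses their cancellation. This is the derivative counterpart of the $2p_{n+1}-p_n$ cancellation you flagged yourself. The missing ingredient is the paper's Lemma~\ref{lem:ag-structure}:
\begin{itemize}
\item By the structure relation for Jacobi polynomials, $\tilde p_{n+1}/\sqrt{c_n}=-\frac{2}{2n+\alpha+\beta+1}(1-x^2)p_n'(v^{\alpha,\beta},x)+A_n(x)\,p_n(v^{\alpha,\beta},x)$ with $A_n=O(n^{-1})$ on $[-1,1]$.
\item Differentiate this and eliminate $[(1-x^2)p_n']'$ using the Jacobi differential equation. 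This gives $|\tilde p'_{n+1}(x)|/\sqrt{c_n}\le\C\bigl(n|p_n(v^{\alpha,\beta},x)|+n^{-1}|p_n'(v^{\alpha,\beta},x)|\bigr)$.
\item By \eqref{derpn} and \eqref{stimapol}, the right-hand side is at most $\C n^{\alpha+3/2}$ near $1$.
\end{itemize}
With this bound your Taylor step closes and gives $1-\tilde x_{n+1}\ge\C n^{-2}$. Your Mehler--Heine route would need the same next-order information and is left unfinished, so it does not fill the gap.

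A second point, which your proposal shares with the paper's proof: the lower bound $\tilde p_{n+1}(1)\ge\C\sqrt{c_n}\,n^{\alpha-1/2}$ read off from \eqref{valuepol_anti2} needs $\alpha>-\tfrac12$. The strict form of \eqref{cond2} does not force this. For $\alpha=-\tfrac12$ its left-hand side equals $\tfrac14(\beta^2-\tfrac14)$, which is positive whenever $|\beta|>\tfrac12$. In that case an exact computation gives $\tilde p_{n+1}(1)/p_n(1)=O(n^{-3})$, while $\tilde p'_{n+1}(1)/p_n(1)\sim n$. Hence $1-\tilde x_{n+1}$ is of order $n^{-4}$, and the statement itself fails. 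The same happens symmetrically at $-1$: for $\alpha=1$, $\beta=-\tfrac12$ one finds $1+\tilde x_1\approx 8.6\cdot10^{-4}$ at $n=2$ and $\approx 8\cdot10^{-6}$ at $n=8$. So the argument, yours or the paper's, should explicitly assume $\alpha,\beta>-\tfrac12$.
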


The next result shows that  the distribution of zeros $\tilde{x}_k$ follows the arc-sine law, that is, its density increases as one approaches the endpoints. 
\medskip
\begin{proposition}\label{prop:nodes3}
Assume that the parameters $\alpha$ and $\beta$ satisfy the strict  inequalities   in \eqref{cond1} and \eqref{cond2}.   Then, the nodes $\tilde{x}_k \in (-1,1)$ with $k=1,\dots,n+1$ have the same asymptotic arc-sine distribution as the zeros of the Jacobi polynomial $p_n(v^{\alpha,\beta})$.

Moreover, with $\varphi(x)=\sqrt{1-x^2}$, one has 
\begin{equation}\label{distanza2}
\Delta \tilde x_{k}:=\tilde x_{k+1}-\tilde x_{k}  \sim \dfrac{\varphi(\tilde x_{k})}{n}, \quad k=1,\dots,n,
\end{equation}
where the constants in $\sim$ are independent of $n$ and $k$.
\end{proposition}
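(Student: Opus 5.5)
The plan is to reduce everything to the interlacing property \eqref{interlacing} together with the known arc-sine distribution \eqref{gaussdist} of the Gauss--Jacobi zeros, using Proposition~\ref{prop:nodes1} to control the two extreme intervals. Write $x_k=\cos\theta_k$ and $\tilde x_k=\cos\tilde\theta_k$, with $\theta,\tilde\theta\in[0,\pi]$. The relation \eqref{distanza2} is equivalent to
\[
\tilde\theta_k-\tilde\theta_{k+1}\sim \frac1n ,
\]
uniformly in $k$, because $\cos\tilde\theta_k-\cos\tilde\theta_{k+1}\sim(\tilde\theta_k-\tilde\theta_{k+1})\sin\tilde\theta_k$ as soon as $\sin\tilde\theta_{k+1}\sim\sin\tilde\theta_k$. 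So I will prove the angular statement. That statement is exactly what ``same asymptotic arc-sine distribution'' means.

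\emph{Upper bound.} By \eqref{interlacing}, the interval $[\tilde x_k,\tilde x_{k+1}]$ is contained in $[x_{k-1},x_{k+1}]$ for $2\le k\le n-1$. For $k=1$ it is contained in $[x_0,x_2]=[-1,x_2]$, and for $k=n$ in $[x_{n-1},1]$. Hence
\[
\tilde\theta_k-\tilde\theta_{k+1}\le \theta_{k-1}-\theta_{k+1}\le \frac{\C}{n},
\]
by two applications of \eqref{gaussdist} in angular form, recalling the conventions $x_{n,0}=-1$ and $x_{n,n+1}=1$.

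\emph{Lower bound.} This is the main point. Interlacing gives only $\tilde x_k<x_k<\tilde x_{k+1}$, and $x_k$ could a priori lie very close to one of its two neighbours. My first attempt would exploit the equation $\tilde p_{n+1}(\tilde x_k)=0$, which by \eqref{recurrence_anti1} reads
\[
2p_{n+1}(\tilde x_k)=(\tilde x_k-a_n)p_n(\tilde x_k).
\]
In the trigonometric variable, Darboux-type asymptotics (or Sturm comparison for the Jacobi ODE) give $p_n(v^{\alpha,\beta},\cos\theta)\approx (v^{\alpha,\beta}\varphi)^{-1/2}\cos(N\theta+\gamma)$ on $I_n$, with $N=n+(\alpha+\beta+1)/2$. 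The equation above then becomes $\cos((N+1)\theta+\gamma)\approx\cos\theta\cos(N\theta+\gamma)$. The exact trigonometric solutions are the points where $\sin\theta\,\sin(N\theta+\gamma)=0$, i.e.\ the zeros of the degree-$(n-1)$ Jacobi-type polynomial with angles $\theta=(j\pi-\gamma)/N$. These solutions sit halfway between the consecutive $\theta_k$. Consequently $\tilde\theta_k$ lies at distance $\sim 1/n$ from both $\theta_{k-1}$ and $\theta_k$, with an $O(1/n)$ error term controlled on $I_n$.

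An alternative cleaner route would be a Sturm comparison or Markov separation argument. One could show that $x_k-\tilde x_k\ge \C\varphi(x_k)/n$ using the identity $\tilde p_{n+1}=p_{n+1}-b_np_{n-1}$ together with the monotonicity of $p_{n+1}/p_{n-1}$ between consecutive zeros. I would keep this route in reserve.

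\emph{Extreme indices.} The cases $k=1$ and $k=n$ (and in general $\tilde\theta$ close to $0$ or $\pi$) are where the asymptotic formula degenerates. There I would use Proposition~\ref{prop:nodes1}: it gives $1-\tilde x_{n+1}\ge\C n^{-2}$, so $\tilde\theta_{n+1}\ge \C/n$. On the other side, $\tilde\theta_n\ge\theta_n+$ a gap to be established. For this gap I would use Bessel (Mehler--Heine) asymptotics of $p_n$ and $p_{n+1}$ near $\theta\sim j/n$. In the scaled variable, the anti-Gauss equation becomes $J_\alpha(z)$-type relations whose roots interlace with the Bessel zeros at a fixed positive distance.

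The hardest step is this uniform lower bound near the endpoints. Finally, with the two-sided angular estimate in hand, the comparability $\sin\tilde\theta_{k+1}\sim\sin\tilde\theta_k$ follows because $\tilde\theta_{n+1}\ge\C/n$ and $\tilde\theta_1\le\pi-\C/n$. This converts the angular estimate to \eqref{distanza2}.
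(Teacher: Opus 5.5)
Your reduction to angles, the upper bound from $[\tilde x_k,\tilde x_{k+1}]\subset[x_{k-1},x_{k+1}]$, and the conversion back to $\varphi(\tilde x_k)/n$ via Proposition~\ref{prop:nodes1} are all correct. The upper bound even makes the paper's appeal to \cite[Lemma~1]{DFR2020} unnecessary. The gap is the lower bound, which is the only nontrivial part.

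Darboux-type asymptotics do not hold on $I_n$ with an $O(1/n)$ error. For $\C/n\le\theta\le\pi-\C/n$, the error relative to the amplitude $(v^{\alpha,\beta}\varphi)^{-1/2}$ is only $O(1/(n\sin\theta))$. Your leading term $-\sin\theta\,\sin(N\theta+\gamma)$ comes from a cancellation between $p_{n+1}$ and $x\,p_n$. If the errors of the two polynomials are estimated separately, this term dominates them only where $\sin\theta\gg n^{-1/2}$; only there does it pin $\tilde\theta_k$ within a small fraction of $1/n$ of $(j\pi-\gamma)/N$. Mehler--Heine is valid only for $\theta=O(1/n)$. So the nodes with $1/n\ll\sin\tilde\theta_k\lesssim n^{-1/2}$ (about $\sqrt n$ of them near each endpoint) would need Hilb-type uniform asymptotics plus a separate argument. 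Your claim that the limiting Bessel relations have roots at a fixed positive distance from the Bessel zeros is asserted, not proved, and the Sturm/monotonicity route is only named. As written, you have $\Delta\tilde x_k\le\C\varphi(\tilde x_k)/n$ but not the reverse inequality.

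No asymptotics are needed; the paper uses a mean-value argument at the Gauss nodes. By \eqref{interlacing}, $\Delta\tilde x_k>x_k-\tilde x_k=|\tilde p_{n+1}(x_k)|/|\tilde p'_{n+1}(\zeta)|$ for some $\zeta\in(\tilde x_k,x_k)\subset I_n$. By \eqref{recurrence_anti1}, $\tilde p_{n+1}(x_k)=2p_{n+1}(x_k)$. The confluent Christoffel--Darboux identity $\lambda_k^{-1}=\sqrt{b_{n+1}}\,|p_n'(v^{\alpha,\beta},x_k)|\,|p_{n+1}(v^{\alpha,\beta},x_k)|$, together with \eqref{estimatelambda}, \eqref{derpn} and \eqref{stimapol}, gives $|\tilde p_{n+1}(x_k)|\ge\C\sqrt{c_n}\,\varphi(x_k)\,(v^{\alpha,\beta}\varphi)^{-1/2}(x_k)$. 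On the other hand, \eqref{ag-structure-derivative} and \eqref{stimapol} give $|\tilde p'_{n+1}(\zeta)|\le\C\sqrt{c_n}\,n\,(v^{\alpha,\beta}\varphi)^{-1/2}(\zeta)$ on $I_n$. Since $1\pm\zeta\sim1\pm x_k\sim1\pm\tilde x_k$ (Gauss spacing for $k\ge2$, Proposition~\ref{prop:nodes1} for $k=1$), you get $\Delta\tilde x_k\ge\C\varphi(\tilde x_k)/n$ uniformly in $k$, endpoints included.

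Be careful how you pair the two bounds. The paper's proof displays $(v^{\alpha,\beta}\varphi)^{-1/2}(x_k)$ as a lower bound for $|\tilde p_{n+1}(x_k)|/\sqrt{c_n}$. That is too large by the factor $\varphi(x_k)^{-1}$: for Legendre, $|p_{n+1}(v^{0,0},x_1)|\sim n^{-1/2}$, not $n^{1/2}$. The paper then uses the weaker \eqref{der_anti-pol}. With the correct numerator, \eqref{der_anti-pol} only yields $\varphi(x_k)^2/n$, so the sharper \eqref{ag-structure-derivative} is what makes the argument work.

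Finally, the paper's proof reads the first assertion as weak convergence of the normalized counting measures, derived from $\tilde J_{n+1}=J_{n+1}+\Upsilon_{n+1}$ and Theorem~\ref{arcsindistr}. A two-sided spacing bound with unspecified constants does not imply that. Interlacing does at once, since the counting functions of $\{\tilde x_k\}$ and $\{x_k\}$ differ by at most one everywhere.
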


The asymptotic arc-sine distribution described in Proposition~\ref{prop:nodes3} is illustrated in Figure~\ref{fig:arcsindistr}, where the Gauss nodes $x_k$ and anti-Gauss nodes $\tilde{x}_k$ are displayed for $n=25$ and two different Jacobi weights.

\begin{figure}[ht]
\centering
\includegraphics[width=0.48\textwidth]{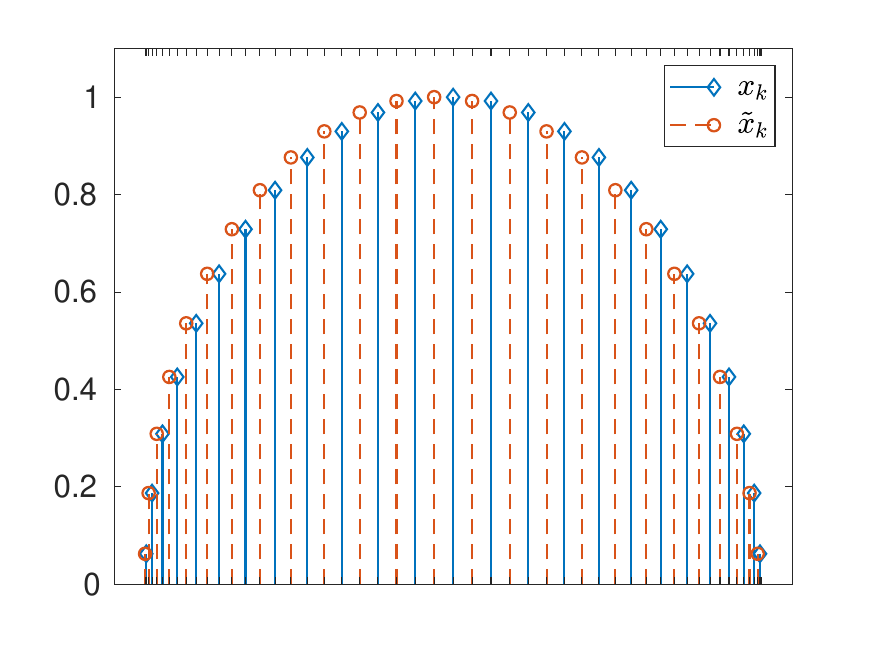}
\includegraphics[width=0.48\textwidth]{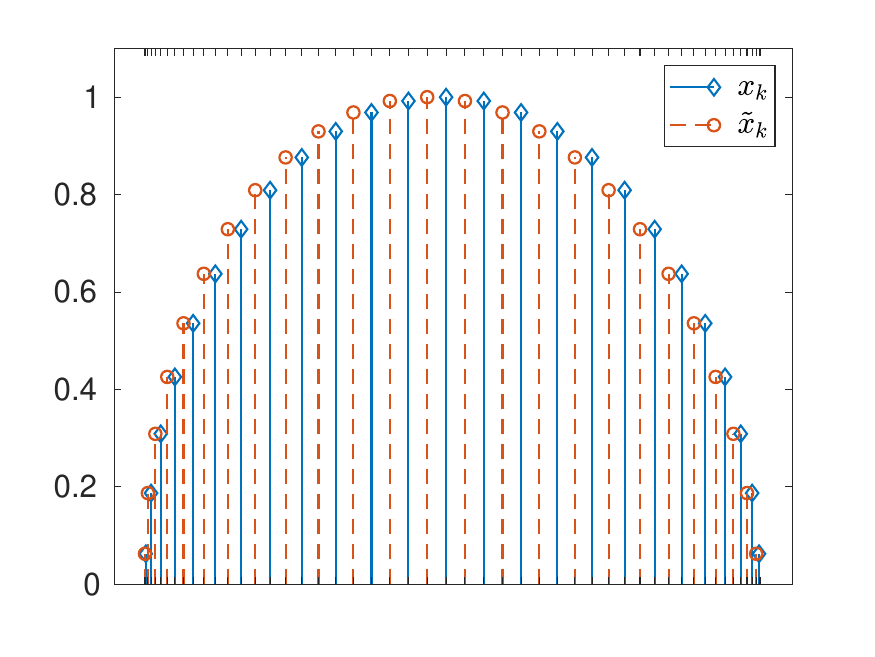}
\caption{Arc-sine distribution of Gauss (continuous line) and anti-Gauss (dashed line) nodes for $n=25$. On the left: $\alpha=\beta=0$; on the right: $\alpha=\frac{1}{2}$ and $\beta=-\frac{1}{4}$.}
\label{fig:arcsindistr}
\end{figure}

\begin{remark}
Note that by combining Proposition~\ref{prop:nodes1} and~\ref{prop:nodes3}, we can assert
\begin{equation}\label{distanza}
\Delta \tilde x_{k}  \sim \dfrac{\varphi(\tilde x_{k})}{n}, \quad k=0,1,\dots,n+1,
\end{equation}
where 
\begin{equation}\label{defdistanza}
\Delta \tilde x_{0}=-1+\tilde x_1, \qquad \Delta \tilde x_{k}:=\tilde x_{k+1}-\tilde x_{k}, \quad k=1,\dots,n, \qquad \Delta \tilde x_{n+1}=1-\tilde{x}_{n+1}.
\end{equation}
\end{remark}

Proposition~\ref{prop:nodes3} allows us to relax the hypothesis of Lemma 1 in~\cite{DFR2020} that can be reformulated as follows.

\medskip
\begin{proposition}
Assume that the parameters $\alpha$ and $\beta$ satisfy the strict  inequalities   in \eqref{cond1} and \eqref{cond2},  and let 
$\{{\tilde \lambda}_{k}\}_{k=1}^{n+1}$ be defined in \eqref{lambatilde2}. 
Then,
\begin{equation*} 
\tilde{\lambda}_k \sim 
v^{\alpha,\beta}(\tilde{x}_k) \, \varphi(\tilde{x}_k) \, \Delta \tilde{x}_k,
\qquad k=1,\ldots,n+1,
\end{equation*}
where the constants in $\sim$ are independent of $n$ and $k$, and $\Delta \tilde{x}_k$ are defined as in \eqref{defdistanza}.
\end{proposition}

Let us now introduce the normalized anti-Gauss Jacobi polynomial
\begin{equation*} 
\widehat p_{n+1}(x):=
\frac{\tilde p_{n+1}(x)}{\| \tilde p_{n+1}\|_{v^{\alpha,\beta}}}
=
\frac{\tilde p_{n+1}(x)}{\sqrt{c_{n+1}+b_n^2c_{n-1}}},
\end{equation*}
where the equality $\| \tilde p_{n+1}\|_{v^{\alpha,\beta}}=\sqrt{c_{n+1}+b_n^2c_{n-1}}$ follows from the identity $$\tilde p_{n+1}=p_{n+1}-b_np_{n-1}$$ and the
orthogonality of $p_{n+1}$ and $p_{n-1}$. 
In particular, since $c_{n+1}=b_{n+1}c_n$ and $b_n\to1/4$, we have
\begin{equation*}
\|\tilde p_{n+1}\|_{v^{\alpha,\beta}}^2
=c_{n+1}+b_n^2c_{n-1}
=(b_{n+1}+b_n)c_n\sim c_n,
\qquad
\frac{\|\tilde p_{n+1}\|_{v^{\alpha,\beta}}}{\sqrt{c_n}}\sim1.
\end{equation*}
We shall repeatedly use this norm equivalence throughout the paper.
\medskip
\begin{proposition}\label{prop:stimaptilde}
Let $I_n:=\left[-1+\C n^{-2},1-\C n^{-2}\right]$  and $u=v^{\gamma,\delta}$ with $\gamma,\delta\, \ge \, 0$. Assume that the parameters $\alpha$ and $\beta$ satisfy the strict  inequalities   in \eqref{cond1} and \eqref{cond2}.  Then
\begin{equation}\label{stimaptilde}
|\widehat p_{n+1}(x)u(x)|
\leq \C (1-x)^{-\frac{\alpha}{2}+\frac{1}{4}+\gamma} (1+x)^{-\frac{\beta}{2}+\frac{1}{4}+\delta},
\qquad x\in I_n,
\end{equation}
where $\C \neq \C(n)$.
 
Moreover,  for each $k=1,\dots,n+1$   one has
\begin{equation}\label{stimaderptilde}
\frac{1}{|\widehat{p}'_{n+1}(\tilde{x}_k)|u(\tilde{x}_k)}
\leq \C\, {\Delta} \tilde{x}_k \,
(1-\tilde{x}_k)^{\frac{\alpha}{2}-\frac{1}{4}-\gamma}(1+\tilde{x}_k)^{\frac{\beta}{2}-\frac{1}{4}-\delta},
\end{equation}
with  $\C \neq \C(n)$ and $\Delta \tilde{x}_k$  as in \eqref{defdistanza}.
\end{proposition}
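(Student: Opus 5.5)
The first bound \eqref{stimaptilde} is stronger by a factor $\varphi(x)$ than the standard bound \eqref{stimapol} for $p_n(v^{\alpha,\beta})$. So it cannot come from estimating the two terms in \eqref{recurrence_anti1} separately: I have to exhibit a cancellation between them. The model case is the Chebyshev one, where
$$\cos((n+1)\theta)-\cos\theta\cos(n\theta)=-\sin\theta\,\sin(n\theta),$$
and the factor $\sin\theta=\varphi(x)$ appears. For general Jacobi weights I will obtain the analogous identity from the structure relation of Jacobi polynomials.

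\textbf{Step 1 (algebraic identity).} By \eqref{recurrence}, \eqref{recurrence_anti1} can be rewritten as
$$\tilde p_{n+1}=(x-a_n)p_n-2b_np_{n-1}.$$
The classical differentiation formula for Jacobi polynomials, written in monic form, reads
$$(1-x^2)p_n'(x)=\Bigl[\tfrac{n(\alpha-\beta)}{2n+\alpha+\beta}-nx\Bigr]p_n(x)+d_n\,p_{n-1}(x),$$
with an explicit $d_n$ that I will check satisfies $d_n=2nb_n\bigl(1+O(n^{-1})\bigr)$. Dividing by $-n$ and subtracting gives
$$\tilde p_{n+1}(x)=-\frac{1}{n}(1-x^2)p_n'(x)+O(n^{-1})\,p_n(x)+O(n^{-1})\,p_{n-1}(x),$$
where the $O(n^{-1})$ coefficients are constants, uniform in $x$, and I also use $a_n=O(n^{-2})$. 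Verifying the asymptotics of $d_n$ from \eqref{bj} and the leading coefficients is the one computation that must be done carefully. It is the main obstacle: if the leading term did not cancel exactly, the whole gain of $\varphi$ would be lost.

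\textbf{Step 2 (estimates on $I_n$).} I divide by $\|\tilde p_{n+1}\|_{v^{\alpha,\beta}}\sim\sqrt{c_n}$ and note that $\sqrt{c_{n-1}/c_n}\sim1$. By \eqref{derpn} and \eqref{stimapol} applied to the weight $v^{\alpha+1,\beta+1}$, for $x\in I_n$,
$$\frac1n(1-x^2)|p_n'(v^{\alpha,\beta},x)|\le \C\,\varphi^2(x)\,(1-x)^{-\frac{\alpha+1}{2}-\frac14}(1+x)^{-\frac{\beta+1}{2}-\frac14}=\C(1-x)^{-\frac\alpha2+\frac14}(1+x)^{-\frac\beta2+\frac14}.$$
For the remainder, \eqref{stimapol} gives
$$\frac1n|p_{n}(v^{\alpha,\beta},x)|\le \frac{\C}{n}(1-x)^{-\frac\alpha2-\frac14}(1+x)^{-\frac\beta2-\frac14},$$
and the same bound holds for $p_{n-1}(v^{\alpha,\beta},x)$. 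On $I_n$ we have $1/n\le \C\,\varphi(x)$, so these remainder terms are also bounded by $\C(1-x)^{-\frac\alpha2+\frac14}(1+x)^{-\frac\beta2+\frac14}$. Multiplying by $u=v^{\gamma,\delta}$ yields \eqref{stimaptilde}.

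\textbf{Step 3 (derivative at the zeros).} By Proposition~\ref{prop:nodes1}, every $\tilde x_k$ lies in $I_n$. From \eqref{lambatilde2} and the norm equivalence,
$$\frac{1}{|\widehat p'_{n+1}(\tilde x_k)|}=\frac{\tilde\lambda_k\,|p_n(v^{\alpha,\beta},\tilde x_k)|\,\|\tilde p_{n+1}\|_{v^{\alpha,\beta}}}{2\sqrt{c_n}}\le \C\,\tilde\lambda_k\,|p_n(v^{\alpha,\beta},\tilde x_k)|.$$
I then insert the preceding proposition, $\tilde\lambda_k\sim v^{\alpha,\beta}(\tilde x_k)\varphi(\tilde x_k)\Delta\tilde x_k$, together with \eqref{stimapol} on $I_n$. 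This gives
$$\frac{1}{|\widehat p'_{n+1}(\tilde x_k)|}\le \C\,\Delta\tilde x_k\,(1-\tilde x_k)^{\frac\alpha2+\frac14}(1+\tilde x_k)^{\frac\beta2+\frac14}.$$
Since $(1\mp\tilde x_k)^{1/2}\le \sqrt2$, this is at most $\C\,\Delta\tilde x_k(1-\tilde x_k)^{\frac\alpha2-\frac14}(1+\tilde x_k)^{\frac\beta2-\frac14}$. Dividing by $u(\tilde x_k)$ gives \eqref{stimaderptilde}.
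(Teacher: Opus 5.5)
Your Steps 1 and 2 are essentially the paper's argument. The paper (Lemma~\ref{lem:ag-structure}) also substitutes the Jacobi structure relation into the recurrence, obtaining $\tilde p_{n+1}/\sqrt{c_n}=-\frac{2}{2n+\alpha+\beta+1}(1-x^2)p_n'(v^{\alpha,\beta},x)+A_n(x)p_n(v^{\alpha,\beta},x)$ with $A_n=O(n^{-1})$ on $[-1,1]$. The only difference is that it eliminates $p_{n-1}$, whereas you keep it with an $O(n^{-1})$ coefficient. Your bookkeeping is right: in fact $d_n=(2n+\alpha+\beta+1)b_n$ exactly, so $d_n/n-2b_n=(\alpha+\beta+1)b_n/n$. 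Step 3 also follows the paper's skeleton: \eqref{lambatilde2}, the bound \eqref{stimapol} at $\tilde x_k$, and an upper bound for $\tilde\lambda_k$. However, the upper bound for $\tilde\lambda_k$ that you import is false.

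The estimate $\tilde\lambda_k\sim v^{\alpha,\beta}(\tilde x_k)\varphi(\tilde x_k)\Delta\tilde x_k$, as printed, carries a spurious factor $\varphi(\tilde x_k)$. Evaluating \eqref{fpanti} at $x=\tilde x_k$ gives $\tilde\lambda_k=1/\tilde K_n(v^{\alpha,\beta},\tilde x_k,\tilde x_k)$. On $I_n$ one also has $\frac12K_n(v^{\alpha,\beta},t,t)\le\tilde K_n(v^{\alpha,\beta},t,t)\le K_n(v^{\alpha,\beta},t,t)\sim n/(v^{\alpha,\beta}(t)\varphi(t))$. Hence $\tilde\lambda_k\sim v^{\alpha,\beta}(\tilde x_k)\varphi(\tilde x_k)/n\sim v^{\alpha,\beta}(\tilde x_k)\Delta\tilde x_k$, exactly like the Gauss weights in \eqref{estimatelambda}.

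Near the endpoints the printed bound is wrong by a factor of order $n$. For Legendre and $k=1$, Proposition~\ref{prop:nodes1} gives $\varphi(\tilde x_1)\sim n^{-1}$ and $\Delta\tilde x_1\sim n^{-2}$, so the printed upper bound says $\tilde\lambda_1\le\C n^{-3}$. But $\tilde\lambda_1\ge 1/K_n(v^{0,0},\tilde x_1,\tilde x_1)\ge 2/(n+1)^2$. Consequently your intermediate inequality with exponents $\frac\alpha2+\frac14$ and $\frac\beta2+\frac14$ is too strong by a factor $\varphi(\tilde x_k)$ and is false there. For Legendre at $k=1$ it predicts $n^{-5/2}$, while $1/|\widehat p'_{n+1}(\tilde x_1)|$ is of order $n^{-3/2}$.

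Your closing step $(1\mp\tilde x_k)^{1/2}\le\sqrt2$ merely cancels the imported error. The conclusion is true, but a chain passing through a false inequality does not prove it.

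The repair is the paper's route. Use
$\tilde\lambda_k\le 2/K_n(v^{\alpha,\beta},\tilde x_k,\tilde x_k)\le\C\,v^{\alpha,\beta}(\tilde x_k)\varphi(\tilde x_k)/n\le\C\,v^{\alpha,\beta}(\tilde x_k)\Delta\tilde x_k$,
where the last step is \eqref{distanza}. Inserted into your chain together with \eqref{stimapol}, this gives $1/|\widehat p'_{n+1}(\tilde x_k)|\le\C\,\Delta\tilde x_k(1-\tilde x_k)^{\frac\alpha2-\frac14}(1+\tilde x_k)^{\frac\beta2-\frac14}$ directly. Dividing by $u(\tilde x_k)$ then yields \eqref{stimaderptilde}, with no weakening step needed.
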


\section{Lagrange Interpolation at the anti-Gauss nodes}\label{sec:interpolation}
Let us introduce the Lagrange polynomial interpolating a given function $f\in C_u$ at the $n+1$ zeros $\{\tilde{x}_k\}_{k=1}^{n+1}$ of the anti-Gauss polynomial $\tilde{p}_{n+1}$
\begin{equation}\label{polanti}
\tilde{L}_{n+1}(v^{\alpha,\beta},f,x)=\sum_{k=1}^{n+1} \tilde{\ell}_{k}(x) f(\tilde{x}_{k}),
\end{equation}
where the fundamental Lagrange polynomials are given by
\begin{equation}\label{polfon}
\tilde{\ell}_{k}(x)=\prod_{\substack{j=1 \\ j \neq k}}^{n+1}\frac{(x-\tilde{x}_{j})}{(\tilde{x}_{k}-\tilde{x}_{j})}=\frac{\tilde{p}_{n+1}(x)}{\tilde{p}'_{n+1}(\tilde{x}_k)(x-\tilde{x}_k)}.
\end{equation}
Note that such polynomials can be also defined in terms of a modified Christoffel-Darboux kernel~\cite[Theorem 2.2]{dfl2026} as follows 
\begin{equation}\label{fpanti}
\tilde \ell_k(x)=\tilde{\lambda}_k \, \tilde{K}_n(v^{\alpha,\beta},x,\tilde{x}_k),
\end{equation}
with
\begin{align*}
\tilde K_n(v^{\alpha,\beta},x,y)
&=
K_n(v^{\alpha,\beta},x,y)
-\frac12 p_n(v^{\alpha,\beta},x)p_n(v^{\alpha,\beta},y)\\
&=\frac12\left[K_n(v^{\alpha,\beta},x,y)+K_{n-1}(v^{\alpha,\beta},x,y) \right],
\end{align*}
where $K_n(v^{\alpha,\beta})$ is defined in \eqref{ChrisDarb}.

Let \begin{equation}\label{leb:functions}
\tilde{\Lambda}_{n+1}(x) :=  u(x) \sum_{k=1}^{n+1} \frac{|\tilde{\ell}_{k}(x)|}{u(x_{k})}
\end{equation}
the $(n+1)$-\emph{weighted Lebesgue function} associated to the zeros of $\tilde p_{n+1}.$

The norm of the  operator $\tilde{L}_{n+1}(v^{\alpha,\beta}):C_u \to C_u$ is given by 
\begin{align}
\label{leb_cost}\tilde{\Lambda}_{n+1}(v^{\alpha,\beta})& :=\left\|\tilde{L}_{n+1}(v^{\alpha,\beta})\right\|_{C_u\to C_u}  =\sup_{\|f\|_{C_u} \leq 1}\left\|\tilde{L}_{n+1}(v^{\alpha,\beta},f)\right\|_{C_u} \\ & =  \max_{x\in [-1,1]} \left\{ u(x) \sum_{k=1}^{n+1} \frac{|\tilde{\ell}_{k}(x)|}{u(x_{k})} \right\},\nonumber
\end{align}
and $\{\tilde{\Lambda}_{n+1}(v^{\alpha,\beta})\}_{n\ge0}$ is referred to as the sequence of \emph{weighted Lebesgue constants}.

It is well known that the behavior of  the sequence $\{\tilde{\Lambda}_{n+1}(v^{\alpha,\beta})\}_n$ plays  a crucial role in the study of the convergence of the interpolation sequence $\{\tilde L_{n+1}(v^{\alpha,\beta})\}_n$ in subspaces of $C_u$. Indeed,  for any $f\in C_u$,  the following estimate  holds
$$
\left\|f-\tilde L_{n+1}(v^{\alpha,\beta},f)\right\|_{C_u} \leq \tilde{\Lambda}_{n+1}(v^{\alpha,\beta}) \, E_n(f)_{u},
$$
with $E_n(f)_{u}$ defined in \eqref{best}. Consequently,  we first establish conditions under which  $\tilde{\Lambda}_{n+1}(v^{\alpha,\beta})=\mathcal{O}(\log n)$.
\vspace{0.5cm}
\begin{theorem}\label{th:stimaL}
Assume that the parameters $\alpha$ and $\beta$ satisfy the inequalities \eqref{cond1} and \eqref{cond2} strictly  and let $u=v^{\gamma,\delta}$ the weight defining the space $C_u$. If
\begin{equation}\label{condgammadelta}
\max\left\{0,\frac{\alpha}{2}-\frac{1}{4}\right\}
\leq\gamma\leq \frac{\alpha}{2}+\frac{3}{4},
\qquad
\max\left\{0,\frac{\beta}{2}-\frac{1}{4}\right\}
\leq\delta\leq \frac{\beta}{2}+\frac{3}{4},
\end{equation}
then,  we have
\begin{equation*}
\left\|\tilde{L}_{n+1}(v^{\alpha,\beta})\right\|_{C_u\to C_u}
\leq \C\log n,
\end{equation*}
where $\C \neq \C(n)$.
\end{theorem}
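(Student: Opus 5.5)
We follow the standard scheme for optimal interpolation processes (as in \cite{MO2001,Mastroianni08}). Using \eqref{polfon} we write
\[
u(x)\sum_{k=1}^{n+1}\frac{|\tilde\ell_k(x)|}{u(\tilde x_k)}
=\sum_{k=1}^{n+1}\frac{|\widehat p_{n+1}(x)u(x)|}{|\widehat p'_{n+1}(\tilde x_k)|\,u(\tilde x_k)\,|x-\tilde x_k|}.
\]
The normalization constant cancels, so we may work with $\widehat p_{n+1}$. We first treat $x\in I_n$, and then the two end strips $[-1,-1+\C n^{-2}]$ and $[1-\C n^{-2},1]$. Fix $x\in I_n$ and let $\tilde x_d$ be a closest node to $x$. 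The term $k=d$ is handled by the mean value theorem. Arc-sine spacing \eqref{distanza} and Markov--Bernstein-type local estimates (via \eqref{der_anti-pol} and \eqref{stimapol}) give $|\tilde\ell_d(x)|\le\C$ and $u(x)\sim u(\tilde x_d)$. The neighbouring terms $k=d\pm1$ are treated the same way.

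For the remaining indices $k\neq d,d\pm1$ we insert \eqref{stimaptilde} and \eqref{stimaderptilde}. Setting $w(t):=(1-t)^{-\frac\alpha2+\frac14+\gamma}(1+t)^{-\frac\beta2+\frac14+\delta}$, the sum is bounded by
\[
\C\sum_{k\neq d,d\pm1}\frac{\Delta\tilde x_k}{|x-\tilde x_k|}\,\frac{w(x)}{w(\tilde x_k)} .
\]
The exponents of $w$ lie in $[0,1]$ precisely under \eqref{condgammadelta}. By \eqref{distanza}, the sum $\sum_k\Delta\tilde x_k/|x-\tilde x_k|$ is a Riemann sum for $\int dt/|x-t|$ over $|x-t|\gtrsim\varphi(x)/n$, which is $O(\log n)$. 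The factor $w(x)/w(\tilde x_k)$ is harmless where $1\pm\tilde x_k\sim 1\pm x$. It remains to control the regions where $\tilde x_k$ is much closer to an endpoint than $x$, say near $1$ with exponent $a:=-\frac\alpha2+\frac14+\gamma\in[0,1]$. There we bound
\[
\sum_{\tilde x_k>\frac{1+x}{2}}\frac{\Delta\tilde x_k}{1-x}\left(\frac{1-x}{1-\tilde x_k}\right)^{a}
\le \C (1-x)^{a-1}\int_{\frac{1+x}{2}}^{1-\C n^{-2}}(1-t)^{-a}\,dt .
\]
For $a<1$ this is bounded by a constant. For $a=1$, i.e.\ $\gamma=\frac\alpha2+\frac34$, it is $O(\log n)$, because the lower limit of $1-t$ is of order $n^{-2}$ by Proposition~\ref{prop:nodes1}. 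The lower constraint $a\ge0$ handles the symmetric situation where $x$ is closer to the endpoint than $\tilde x_k$, since then $(1-x)^a/(1-\tilde x_k)^a\le1$. The endpoint $-1$ is treated identically with $\beta,\delta$.

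For $x$ in the end strips, say $1-x\le\C n^{-2}$, we cannot use \eqref{stimaptilde} directly. Instead we use \eqref{stimapol}, which gives $|p_n(v^{\alpha,\beta},x)|,|p_{n+1}(v^{\alpha,\beta},x)|\le\C n^{\alpha+1/2}$, so that $|\widehat p_{n+1}(x)u(x)|\le \C n^{\alpha+\frac12-2\gamma}$. This matches the value of the right-hand side of \eqref{stimaptilde} at $1-x\sim n^{-2}$. Since all nodes lie in $I_n$, $|x-\tilde x_k|\ge \C(1-\tilde x_k)$ for $k$ away from $n+1$ (and $\sim n^{-2}$ for the last ones). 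Hence the same sums as above apply with $x$ replaced by $1-\C n^{-2}$, giving again $O(\log n)$.

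The main obstacle is the pointwise bound near the closest node. The bound \eqref{stimaderptilde} controls $1/\widehat p'_{n+1}$ only at the nodes, so bounding $|\tilde\ell_d(x)|$ requires an estimate on $\widehat p'_{n+1}$ on $I_n$. This estimate comes from \eqref{der_anti-pol} together with \eqref{stimapol} applied to $v^{\alpha+1,\beta+1}$, which gives $|\widehat p'_{n+1}(x)|\le \C n\,\varphi(x)^{-1}(v^{\alpha,\beta}(x)\varphi(x))^{-1/2}$. Combined with the fact that $u$ and $v^{\alpha,\beta}\varphi$ are comparable on $[\tilde x_{d-1},\tilde x_{d+1}]$ (from \eqref{distanza}), this yields $|\tilde\ell_d(x)|\le\C$. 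We take this local doubling/comparability step as the delicate part and would verify it first, separately near each endpoint.
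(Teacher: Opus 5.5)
Your skeleton matches the paper's: you split off the closest node, insert \eqref{stimaptilde}--\eqref{stimaderptilde}, and bound the remaining terms by an arc-sine sum with exponents in $[0,1]$. Your hand computation there simply re-derives Lemma~\ref{lemma:somma}. The two places where you depart from the paper, however, rely on bounds that ignore the cancellation in $\tilde p_{n+1}=2p_{n+1}+(a_n-x)p_n$, and as written both fail.

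\textbf{Closest-node term.} Combining \eqref{der_anti-pol} with \eqref{stimapol} gives $|\widehat p'_{n+1}(\xi)|\le \C n\,\varphi(\xi)^{-1}(v^{\alpha,\beta}\varphi)^{-1/2}(\xi)$. The only lower bound available at the node, \eqref{stimaderptilde} together with \eqref{distanza}, is $|\widehat p'_{n+1}(\tilde x_d)|\ge \C n\,(v^{\alpha,\beta}\varphi)^{-1/2}(\tilde x_d)$. The mean value theorem therefore yields only $|\tilde\ell_d(x)|\le \C/\varphi(x)$, which is of order $n$ near $\pm1$. The comparability of $u$ and $v^{\alpha,\beta}\varphi$ on neighbouring nodes does not remove this extra factor.

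The lower bound cannot be improved. For $\alpha=\beta=\frac12$ one has $b_n=\frac14$ and $\tilde p_{n+1}=2^{-n}T_{n+1}$, so $|\widehat p'_{n+1}(\tilde x_k)|\sim n/\varphi(\tilde x_k)=n\,(v^{\alpha,\beta}\varphi)^{-1/2}(\tilde x_k)$. It is your upper bound that is too crude. You can repair it in either of two ways:
\begin{itemize}
\item Use \eqref{ag-structure-derivative} instead. On $I_n$ it gives $|\widehat p'_{n+1}(x)|\le \C n\,(v^{\alpha,\beta}(x)\varphi(x))^{-1/2}$, because $\frac1n|p_n'(v^{\alpha,\beta},x)|\le \C\varphi(x)^{-1}(v^{\alpha,\beta}\varphi)^{-1/2}(x)$ and $\varphi(x)\ge \C n^{-1}$ there.
\item Argue as the paper does. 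By \eqref{fpanti}, $\tilde\ell_d(x)=\tilde K_n(v^{\alpha,\beta},x,\tilde x_d)/\tilde K_n(v^{\alpha,\beta},\tilde x_d,\tilde x_d)$. Cauchy--Schwarz for the positive definite kernel together with \eqref{modified-jacobi-diagonal-kernel-estimate} then gives $|\tilde\ell_d(x)|u(x)/u(\tilde x_d)\le\C$, with no derivative bounds needed.
\end{itemize}

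\textbf{End strips.} On $[1-\C n^{-2},1]$, \eqref{stimapol} and the triangle inequality give $|\widehat p_{n+1}(x)u(x)|\le \C n^{\alpha+\frac12-2\gamma}$. The right-hand side of \eqref{stimaptilde} at $1-x\sim n^{-2}$ is $n^{\alpha-\frac12-2\gamma}$. So your claimed match is off by a factor $n$, and the strip estimate you obtain is worse than $\log n$ by that factor. In the example above, $|\widehat p_{n+1}(1)|\sim |T_{n+1}(1)|=1=n^{\alpha-\frac12}$, not $n$.

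The missing factor $1/n$ is exactly the cancellation recorded in \eqref{valuepol_anti2}. You can recover it on the whole strip from \eqref{ag-structure-identity}: there $|A_n(x)|\le \C/n$ and $\frac1n(1-x^2)|p_n'(v^{\alpha,\beta},x)|\le \C n^{\alpha-\frac12}$. With $|\widehat p_{n+1}(x)|\le\C n^{\alpha-\frac12}$ in hand, your strip argument goes through.

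The paper avoids the strips altogether with a Remez-type inequality, $\|\tilde L_{n+1}(v^{\alpha,\beta},f)u\|_\infty\le \C\max_{x\in I_n}|\tilde L_{n+1}(v^{\alpha,\beta},f,x)u(x)|$, which is the simpler route.
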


Next theorem is a direct consequence of the previous result.
\medskip
\begin{theorem}\label{th:convergence}
Assume that the hypotheses of Theorem~\ref{th:stimaL} are satisfied. Then, for every $f\in C_u$,
\[
\left\|f-\tilde L_{n+1}(v^{\alpha,\beta},f)\right\|_{C_u}
\leq
\C \log n\, E_n(f)_u,
\]
where $\C$ is a positive constant independent of $n$ and $f$. Moreover, for any $f \in W_r(u)$, $r \geq 1$, the following estimate holds 
\begin{equation}\label{estimateWr}
\left\|f-\tilde L_{n+1}(v^{\alpha,\beta},f)\right\|_{C_u} \leq \, \C \, \frac{\log n}{n^r}\, \|f\|_{W_r(u)},
\end{equation}
with $\C \neq \C(n,f)$.
\end{theorem}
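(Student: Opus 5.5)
The first estimate is immediate from Theorem~\ref{th:stimaL}. Fix $f\in C_u$ and $n$. Let $P^*\in\mathbb{P}_n$ be a polynomial of best approximation, i.e. $\|f-P^*\|_{C_u}=E_n(f)_u$; it exists because $\mathbb{P}_n$ is finite dimensional, and otherwise one can take a near-best polynomial and pass to the limit. Since $\tilde L_{n+1}(v^{\alpha,\beta})$ interpolates at $n+1$ distinct nodes, it reproduces every polynomial of degree at most $n$, so $\tilde L_{n+1}(v^{\alpha,\beta},P^*)=P^*$. We then write
\[
f-\tilde L_{n+1}(v^{\alpha,\beta},f)=(f-P^*)-\tilde L_{n+1}(v^{\alpha,\beta},f-P^*),
\]
and the triangle inequality together with the operator-norm definition \eqref{leb_cost} gives
\[
\|f-\tilde L_{n+1}(v^{\alpha,\beta},f)\|_{C_u}\le \bigl(1+\tilde\Lambda_{n+1}(v^{\alpha,\beta})\bigr)E_n(f)_u .
\]
Theorem~\ref{th:stimaL} bounds $\tilde\Lambda_{n+1}(v^{\alpha,\beta})$ by $\C\log n$. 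For $n\ge2$ the summand $1$ is absorbed into this bound, which gives the first claim; the finitely many small $n$ are handled by enlarging $\C$.

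For the second claim, take $f\in W_r(u)\subset C_u$ and insert the Favard inequality \eqref{stimaWr}, $E_n(f)_u\le \C n^{-r}\|f\|_{W_r(u)}$, into the first estimate. This yields \eqref{estimateWr} with a constant independent of $n$ and $f$.

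No step is a real obstacle. The only points needing care are the reproduction of $\mathbb{P}_n$, which follows because there are $n+1$ distinct nodes (they are distinct by the interlacing \eqref{interlacing}), and the fact that the interpolant is defined for $f\in C_u$, since the nodes are interior by Proposition~\ref{prop:nodes1}.

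A minor technical remark: \eqref{leb_cost} defines the operator norm. If it is read literally with $u(x_k)$ instead of $u(\tilde x_k)$, the bound in Theorem~\ref{th:stimaL} is still stated directly for $\|\tilde L_{n+1}(v^{\alpha,\beta})\|_{C_u\to C_u}$, and that operator-norm form is exactly what the argument uses.
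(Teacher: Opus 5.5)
Your proposal is correct and follows the paper's proof: reproduction of $\mathbb{P}_n$ by $\tilde L_{n+1}(v^{\alpha,\beta})$, the triangle inequality giving the factor $1+\tilde\Lambda_{n+1}(v^{\alpha,\beta})$, Theorem~\ref{th:stimaL} to bound that factor by $\C\log n$, and then the Favard inequality \eqref{stimaWr} for $f\in W_r(u)$. Your side remark about the typo $u(x_k)$ versus $u(\tilde x_k)$ in \eqref{leb_cost} is accurate and does not affect the argument.
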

\medskip
\begin{remark}
According to Theorem \ref{th:convergence}, the sequence of Lagrange interpolants
$\{\tilde L_{n+1}(v^{\alpha,\beta},f)\}_n$ exhibits the same approximation behavior as the sequence of best polynomial approximants in $C_u$, up to the logarithmic factor $\log n$.

A similar result holds for the  Lagrange interpolation process at the zeros
$\{x_k\}_{k=1}^{n}$ of the Jacobi polynomial $p_n(v^{\alpha,\beta})$, namely
\begin{equation}\label{pol}
\mathcal{L}_{n}(v^{\alpha,\beta},f,x)
=\sum_{k=1}^{n}\ell_k(x)f(x_k),
\qquad
\ell_k(x)=\lambda_kK_{n-1}(v^{\alpha,\beta},x,x_k),
\end{equation}
for which  the corresponding Lebesgue constants
\begin{equation}\label{leb:constants_gauss}
{\Lambda}_{n}(v^{\alpha,\beta})=\max_{x\in [-1,1]} u(x) \sum_{k=1}^{n} \frac{|{\ell}_{k}(x)|}{u(x_{k})}=\mathcal{O}(\log n)\end{equation} if and only if the following  conditions are satisfied  \cite{MR1997,Mastroianni08}
\begin{equation}\label{condgauss}
\max\left\{0,\frac{\alpha}{2}+\frac{1}{4}\right\}
\leq\gamma\leq\frac{\alpha}{2}+\frac{5}{4},
\qquad
\max\left\{0,\frac{\beta}{2}+\frac{1}{4}\right\}
\leq\delta\leq\frac{\beta}{2}+\frac{5}{4}.
\end{equation}

Comparing conditions \eqref{condgammadelta} and \eqref{condgauss}, we observe that the admissible ranges of $\gamma$ and $\delta$ ensuring optimality for the interpolation process \eqref{polanti} are shifted toward lower values. Consequently, the interpolation process  \eqref{polanti} is optimal for smaller values of the exponents defining the weight $u$.
In particular, if $\alpha\le 1/2$ and $\beta\le 1/2$, optimality is achieved even in the unweighted case, i.e., for $\gamma=\delta=0$, whereas the Lagrange interpolation process based on Jacobi nodes is not optimal. On the other hand, this shift toward lower values reduces the admissible range for larger values of $\gamma$ and $\delta$. Therefore, when stronger endpoint weights are needed to compensate for singularities of the interpolated function, the admissible ranges in \eqref{condgauss} become more favorable, and interpolation at Jacobi nodes may represent the more appropriate choice. This reflects a balance between the two interpolation processes: the proposed method enlarges the admissible range for smaller values of $\gamma$  and $\delta$, whereas interpolation at Jacobi nodes is more advantageous when larger values of the exponents of the  weight $u$ are required.
\end{remark}

\section{Numerical experiments}\label{sec:tests}
This section presents some  numerical experiments aimed to assess the performance of the interpolation process introduced in Section~\ref{sec:interpolation}.

The first numerical experiment investigates the behavior of the weighted Lebesgue functions and constants defined in \eqref{leb:functions} and \eqref{leb_cost}, respectively, in order to validate the estimates established in Theorem~\ref{th:stimaL}. It also compares these weighted Lebesgue functions and constants with the corresponding ones associated with the classical Lagrange interpolation process $\mathcal{L}_{n}(v^{\alpha,\beta})$ defined in \eqref{pol}.

The second experiment focuses on the approximation of three test functions, $f_i$, $i=1,2,3$, exhibiting different degrees of smoothness. For increasing values of $n$, we compute the weighted interpolation errors  
\begin{align}
\tilde{\epsilon}_n(f_i)
&=\max_{x\in X}
|[f_i(x)-\tilde{L}_{n+1}(v^{\alpha,\beta},f_i,x)]u(x)|, 
\qquad i=1,2,3, \label{erroreAG} \\
{\epsilon}_n(f_i)
&=\max_{x\in X}
|[f_i(x)-\mathcal{L}_{n+1}(v^{\alpha,\beta},f_i,x)]u(x)|,
\qquad i=1,2,3, \label{erroreG}
\end{align}
where $X$ is a sufficiently dense set of equispaced points in the interval $[-1,1]$. The computed errors $\tilde{\epsilon}_n(f_i)$ are compared with the theoretical estimates established in Theorem~\ref{th:convergence} and with the corresponding errors $\epsilon_n(f_i)$.

All the computations are performed on an Intel Xeon E-2244G system with 16Gb RAM, running Matlab R2024b.
\medskip

\begin{test}\label{tab:example1}
\rm
We first observe that for the Legendre weight ($\alpha=\beta=0$) and the second-kind Chebyshev weight ($\alpha=\beta=\tfrac12$), the left-hand sides of inequalities \eqref{cond1} and \eqref{cond2} reduce to $n^2+n$ when $\alpha=\beta=0$, and to $2n^2+4n+\tfrac32$ when $\alpha=\beta=\tfrac12$. Hence, inequalities \eqref{cond1} and \eqref{cond2} are strictly satisfied for every $n\ge1$. Consequently, in both cases, all anti-Gauss nodes lie in the open interval $(-1,1)$. We now investigate the behavior of the weighted Lebesgue constants \eqref{leb_cost} and \eqref{leb:constants_gauss} associated with the Legendre weight ($\alpha=\beta=0$) and the second-kind Chebyshev weight ($\alpha=\beta=\tfrac12$). In Figures~\ref{fig:test1} and \ref{fig:test2}, the orange dashed line represents the weighted anti-Gauss Lebesgue constants \eqref{leb_cost}, whereas the solid blue line represents the weighted Gauss Lebesgue constants \eqref{leb:constants_gauss}.

Figure~\ref{fig:test1} illustrates the behavior of $\tilde{\Lambda}_{n+1}(v^{\alpha,\beta})$ and $\Lambda_n(v^{\alpha,\beta})$  with $u(x)=(1-x^2)^{1/4}$ in the Legendre case and $u(x)=\sqrt{1-x^2}$ in the second-kind Chebyshev case. We note that the chosen weight functions $u$ satisfy both \eqref{condgammadelta} and \eqref{condgauss}. As expected, in both cases the weighted Lebesgue constants associated with the anti-Gauss interpolation process grow logarithmically, as established in Theorem~\ref{th:stimaL}, exhibiting the same asymptotic behavior as the weighted Lebesgue constants associated with the Gauss interpolation process.

\begin{figure}[ht]
\centering
\includegraphics[width=0.48\textwidth]{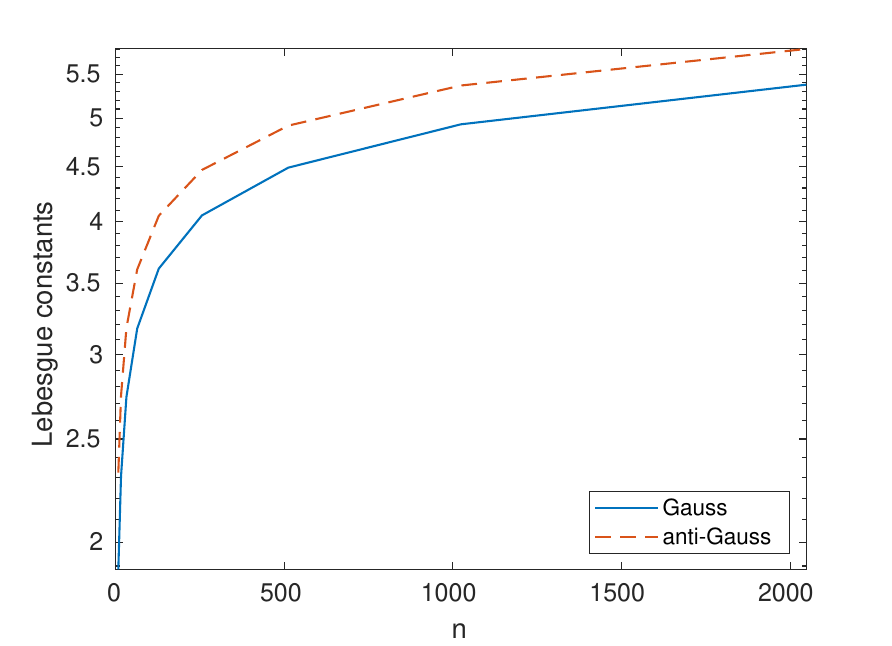}
\includegraphics[width=0.48\textwidth]{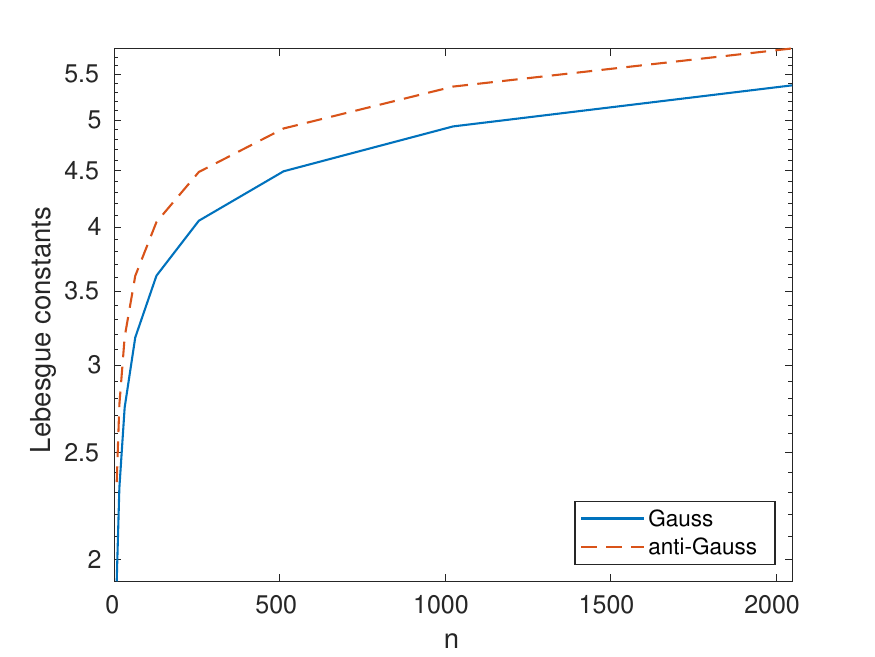}
\caption{To the left, the Lebesgue constants $\Lambda_n(v^{\alpha,\beta})$ and $\tilde{\Lambda}_{n+1}(v^{\alpha,\beta})$ associated to the Legendre weight $\alpha=\beta=0$ with $u(x)=\sqrt[4]{1-x^2}$. To the right, the Lebesgue constants $\Lambda_n(v^{\alpha,\beta})$ and $\tilde{\Lambda}_{n+1}(v^{\alpha,\beta})$ associated to the Chebyshev weight of second kind $\alpha=\beta=1/2$ with $u(x)=\sqrt{1-x^2}$.}
\label{fig:test1}
\end{figure}

We now investigate the behavior of the Lebesgue constants $\tilde{\Lambda}_{n+1}(v^{\alpha,\beta})$ and $\Lambda_n(v^{\alpha,\beta})$ still associated with the Legendre and second-kind Chebyshev weights,  in the unweighted case, i.e., for $\gamma=\delta=0$. Notice that the conditions in \eqref{condgammadelta} are still satisfied, whereas those in \eqref{condgauss} are not. Figure~\ref{fig:test2} displays the corresponding results. As predicted by conditions \eqref{condgauss} and \eqref{condgammadelta}, the Gauss Lebesgue constants exhibit a growth rate faster than logarithmic, whereas the anti-Gauss Lebesgue constants grow logarithmically.

\begin{figure}[ht]
\centering
\includegraphics[width=0.48\textwidth]{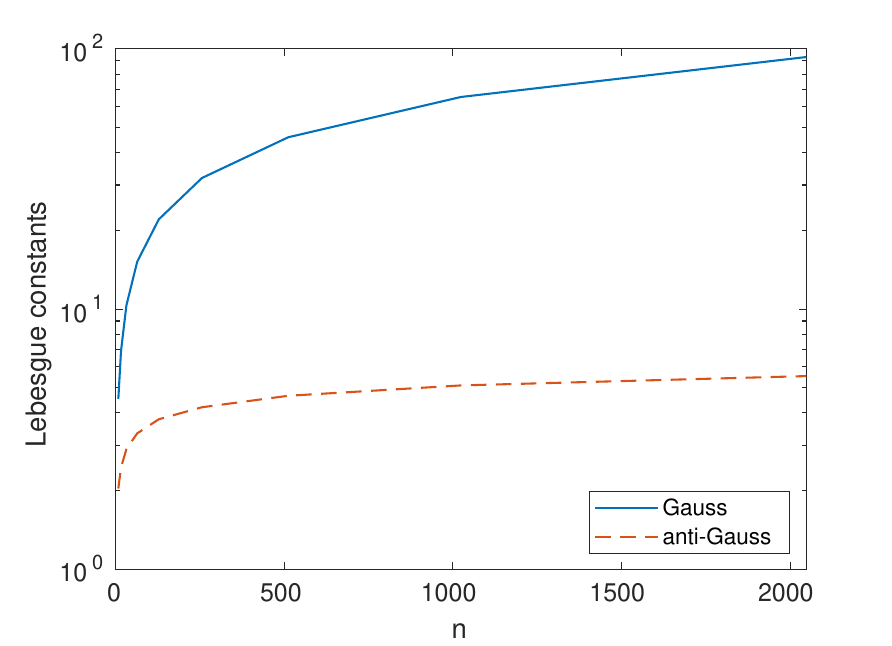}
\includegraphics[width=0.48\textwidth]{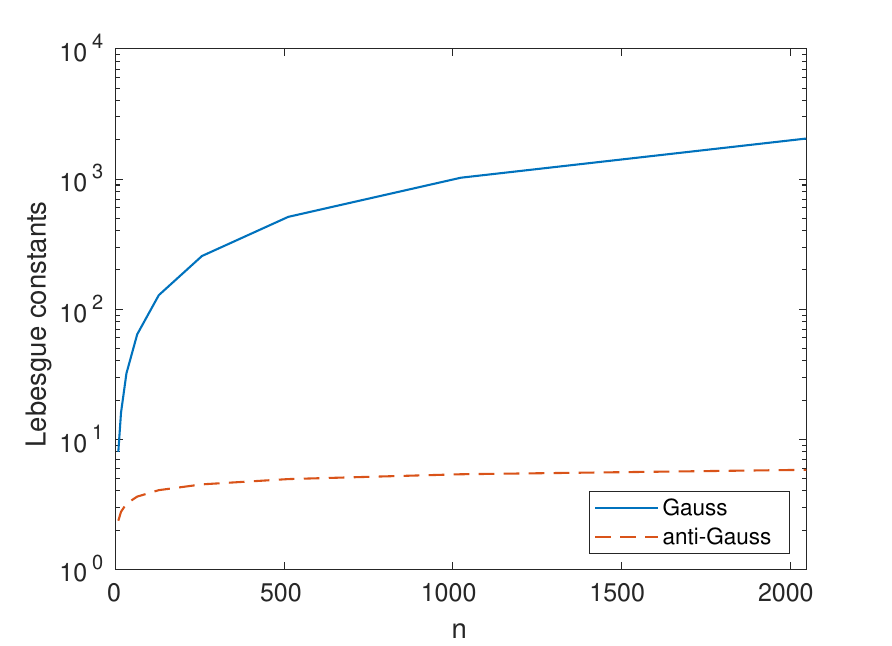}
\caption{To the left, the Lebesgue constants $\Lambda_n(v^{\alpha,\beta})$ and $\tilde{\Lambda}_{n+1}(v^{\alpha,\beta})$ associated to the Legendre weight ($\alpha=\beta=0$) with $u(x)=1$. To the right, those associated to the second-kind Chebyshev weight ($\alpha=\beta=1/2$)  with $u(x)=1$.}
\label{fig:test2}
\end{figure}

Finally, Figure~\ref{fig:test3} shows the Lebesgue functions associated with the weight $v^{1/4,1/2}$ for $n=32$ and $n=128$. Here, choosing $u(x)=(1-x)^{1/2}(1+x)^{3/4}$ ensures that both conditions \eqref{condgammadelta} and \eqref{condgauss} are satisfied. 
The plots show that the two Lebesgue functions exhibit essentially the same behavior. The anti-Gauss Lebesgue function is only slightly larger than the Gauss Lebesgue function over the entire interval.

\begin{figure}[ht]
\centering
\includegraphics[width=0.48\textwidth]{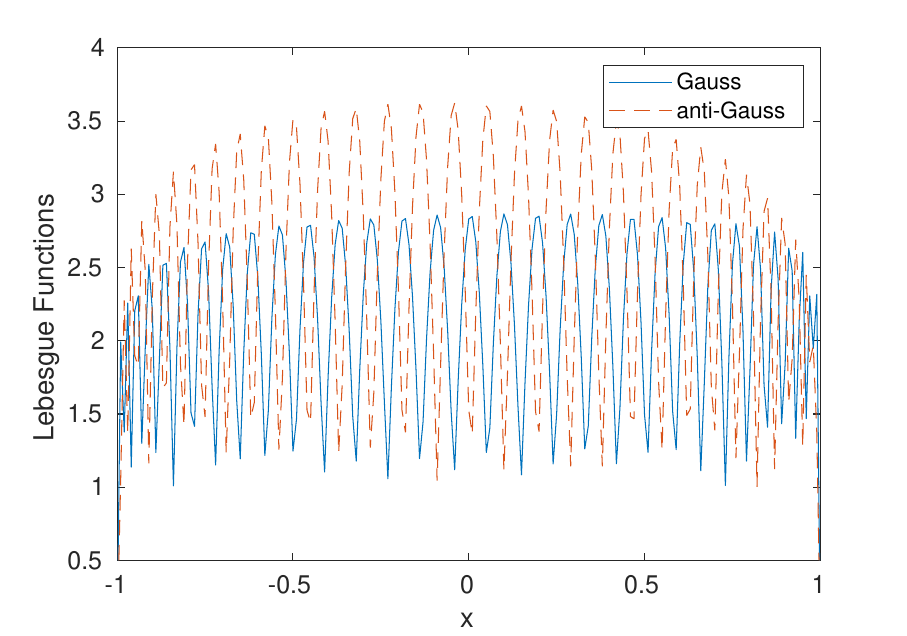}
\includegraphics[width=0.48\textwidth]{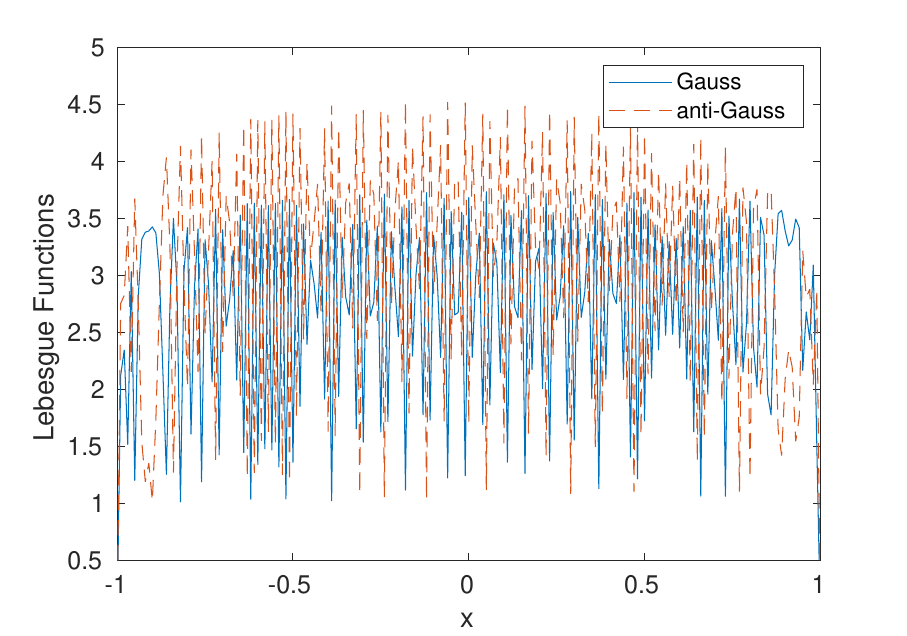}
\caption{The Lebesgue functions $\Lambda_n(v^{\alpha,\beta}, x)$ and $\tilde{\Lambda}_{n+1}(v^{\alpha,\beta}, x)$ for $\alpha=1/4$, $\beta=1/2$, $\gamma=1/2$ and $\delta=3/4$, with $n=32$ to the left and $n=128$ to the right. }
\label{fig:test3}
\end{figure}

\end{test}

\medskip

\begin{test}\label{tab:example2}
\rm Let us consider the following functions
\[
\begin{aligned}
f_1(x)&=(1+|x|)^{\frac{3}{2}}(x^2+\cos(x)),
& f_2(x)&=e^{-|x|^{\frac{5}{2}}},\\
f_3(x)&=|x+0.5|^{\frac{7}{2}}\frac{\sin(x)}{1+x^2}.
\end{aligned}
\]
and let us approximate them with the interpolating polynomials \eqref{polanti} and \eqref{pol} associated to the weights
\begin{equation*}
v^{\alpha,\beta}(x)=\begin{cases} 
(1+x)^{-1/3}, & i=1 \\
\sqrt{1-x^2}, & i=2 \\
(1-x)^{-1/4} (1+x)^{1/3}, & i=3 \\
\end{cases}.
\end{equation*}
Note that conditions \eqref{cond1} and \eqref{cond2} are satisfied, so that the nodes $\tilde{x}_k \in (-1,1)$ for all $k=1,\dots,n+1$. Table~\ref{table1} reports the errors \eqref{erroreAG} and ~\eqref{erroreG}
with a weight $u=v^{\gamma,\delta}$ satisfying both  \eqref{condgauss} and \eqref{condgammadelta}. In detail, we fix
\begin{equation*}
u(x)=\begin{cases} 
(1-x)^{1/4} , & i=1 \\
\sqrt{1-x^2} , & i=2 \\
(1-x)^{1/8} (1+x)^{5/12}, & i=3 
\end{cases}.
\end{equation*}

The numerical errors are smaller than the theoretical estimates provided by \eqref{estimateWr}. Since $f_1 \in W_1(u)$, $f_2 \in W_2(u)$, and $f_3 \in W_3(u)$, Theorem~\ref{th:convergence} predicts convergence rates of order $\mathcal{O}!\left(\log n, n^{-i}\right)$, $i=1,2,3$, respectively.  We emphasize that the computation for $n=1024$ and $n=2048$ are feasible thanks to the stable representations of the fundamental Lagrange polynomials $\ell_k$ and $\tilde\ell_k$ in terms of the Christoffel-Darboux kernel given in  \eqref{pol} and the modified Christoffel-Darboux in \eqref{fpanti}, respectively.

\begin{table}[ht]
\caption{Numerical Results of Test~\ref{tab:example2}}
\label{table1}
\centering
\small
\setlength{\tabcolsep}{4pt}
\begin{tabular}{c|cc|cc|cc}
$n$ & $\epsilon_n(f_1)$ & $\tilde{\epsilon}_n(f_1)$ & $\epsilon_n(f_2)$ & $\tilde{\epsilon}_n(f_2)$ &  $\epsilon_n(f_3)$ & $\tilde{\epsilon}_n(f_3)$  \\
\hline
16 & 8.90e-02 & 4.75e-02 & 1.01e-03 & 4.50e-04 & 2.95e-05 & 2.31e-05\\ 
  32 & 4.49e-02 & 2.41e-02 & 1.84e-04 & 8.47e-05 & 1.33e-06 & 2.76e-06\\ 
  64 & 2.25e-02 & 1.18e-02 & 3.34e-05 & 1.46e-05 & 2.26e-07 & 1.45e-07\\ 
 128 & 1.13e-02 & 5.92e-03 & 6.01e-06 & 2.58e-06 & 1.12e-08 & 2.14e-08 \\ 
 256 & 5.65e-03 & 1.78e-03 & 1.07e-06 & 3.62e-07 & 1.81e-09 & 5.38e-10\\ 
 512 & 2.83e-03 & 7.58e-04 & 1.90e-07 & 6.46e-08 & 4.41e-11 & 1.68e-10\\ 
 1024 & 1.41e-03 & 3.79e-04 & 3.38e-08 & 5.81e-09 & 1.43e-11 & 3.62e-12 \\ 
 2048 & 7.07e-04 & 1.90e-04 & 5.97e-09 & 7.61e-10  & 4.95e-12 & 1.30e-12 \\ 
\hline
\end{tabular}
\end{table} 
\end{test}

\section{Proofs}\label{sec:proofs}
This section gathers the proofs of the theoretical results presented in Section \ref{sec:newproperties} and \ref{sec:interpolation}.

In order to prove Proposition~\ref{prop:nodes1}, we need the following Lemma.

\begin{lemma}\label{lem:ag-structure}
The following identity holds true
\begin{equation}\label{ag-structure-identity}
\begin{aligned}
\frac{\tilde p_{n+1}(x)}{\sqrt{c_n}}
&=
-\frac{2}{2n+\alpha+\beta+1} (1-x^2)p_n'(v^{\alpha,\beta},x)\\
&\quad+
A_n(x)p_n(v^{\alpha,\beta},x), \qquad n \geq 1,
\end{aligned}
\end{equation}
where
\[
A_n(x):=
x-a_n+
\frac{2n(\alpha-\beta-(2n+\alpha+\beta)x)}{(2n+\alpha+\beta)(2n+\alpha+\beta+1)},
\] 
with $a_n$ defined in \eqref{aj}.
Furthermore,
\begin{equation}\label{ag-structure-derivative}
\frac{|\tilde p'_{n+1}(x)|}{\sqrt{c_n}}
\leq
\C  \left[n |p_n(v^{\alpha,\beta},x) |+\frac{1}{n}|p_n'(v^{\alpha,\beta},x) | \right],
\qquad |x|\leq 1,
\end{equation}
where $\C \neq \C(n)$.
\end{lemma}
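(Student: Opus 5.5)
The plan is to start from \eqref{recurrence_anti2} combined with the three-term recurrence \eqref{recurrence}. Writing $p_{n+1}=(x-a_n)p_n-b_np_{n-1}$ gives
\[
\tilde p_{n+1}(x)=(x-a_n)p_n(x)-2b_n\,p_{n-1}(x).
\]
The aim is then to eliminate $p_{n-1}$ through the classical differential–difference (structure) relation for Jacobi polynomials. In the standard normalization $P_n^{(\alpha,\beta)}$ this reads
\[
(2n+\alpha+\beta)(1-x^2)\tfrac{d}{dx}P_n^{(\alpha,\beta)}=n\bigl[\alpha-\beta-(2n+\alpha+\beta)x\bigr]P_n^{(\alpha,\beta)}+2(n+\alpha)(n+\beta)P^{(\alpha,\beta)}_{n-1}.
\]
Rescaling to monic polynomials with the leading-coefficient ratio $k_n/k_{n-1}=\frac{(2n+\alpha+\beta)(2n+\alpha+\beta-1)}{2n(n+\alpha+\beta)}$ gives
\[
(1-x^2)p_n'=\frac{n(\alpha-\beta-(2n+\alpha+\beta)x)}{2n+\alpha+\beta}\,p_n+C_n\,p_{n-1},
\qquad
C_n=\frac{4n(n+\alpha)(n+\beta)(n+\alpha+\beta)}{(2n+\alpha+\beta)^2(2n+\alpha+\beta-1)}.
\]
Comparing with \eqref{bj} shows $b_n/C_n=1/(2n+\alpha+\beta+1)$. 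Substituting $p_{n-1}=C_n^{-1}\bigl[(1-x^2)p_n'-\tfrac{n(\cdots)}{2n+\alpha+\beta}p_n\bigr]$ into the expression for $\tilde p_{n+1}$ then produces exactly the coefficient $-2/(2n+\alpha+\beta+1)$ in front of $(1-x^2)p_n'$, together with the function $A_n(x)$ in front of $p_n$. Dividing by $\sqrt{c_n}$, so that $p_n/\sqrt{c_n}=p_n(v^{\alpha,\beta})$ (and likewise for derivatives), yields \eqref{ag-structure-identity}. The only real work here is the bookkeeping of the normalization constants, and I would double-check it on $n=1$.

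For \eqref{ag-structure-derivative} I will differentiate \eqref{ag-structure-identity}:
\[
\frac{\tilde p'_{n+1}}{\sqrt{c_n}}=\frac{4x}{2n+\alpha+\beta+1}p_n'(v^{\alpha,\beta})-\frac{2}{2n+\alpha+\beta+1}(1-x^2)p_n''(v^{\alpha,\beta})+A_n'p_n(v^{\alpha,\beta})+A_np_n'(v^{\alpha,\beta}).
\]
The second-derivative term is removed with the Jacobi differential equation
\[
(1-x^2)y''=\bigl(\alpha-\beta+(\alpha+\beta+2)x\bigr)y'-n(n+\alpha+\beta+1)y .
\]
Multiplied by $2/(2n+\alpha+\beta+1)$, it contributes $O(n)\,|p_n(v^{\alpha,\beta})|+O(1/n)\,|p_n'(v^{\alpha,\beta})|$ uniformly in $|x|\le1$. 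The first term is already $O(1/n)|p_n'|$.

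The main obstacle is the term $A_np_n'$. A priori $A_n$ is only $O(1)$, which would give the wrong power of $n$. The key observation I will use is a cancellation:
\[
A_n(x)=x\Bigl(1-\frac{2n}{2n+\alpha+\beta+1}\Bigr)-a_n+\frac{2n(\alpha-\beta)}{(2n+\alpha+\beta)(2n+\alpha+\beta+1)}.
\]
Since $a_n=O(n^{-2})$ by \eqref{aj}, this gives $\sup_{|x|\le1}|A_n(x)|=O(1/n)$. Because $A_n$ is linear, also $A_n'=O(1/n)$. Collecting all terms, each is bounded by $\C\bigl[n|p_n(v^{\alpha,\beta},x)|+n^{-1}|p_n'(v^{\alpha,\beta},x)|\bigr]$ with $\C$ depending only on $\alpha,\beta$, which proves \eqref{ag-structure-derivative}.
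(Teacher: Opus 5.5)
Your proposal is correct and follows essentially the same route as the paper. Like the paper, you eliminate $p_{n-1}$ via the Jacobi structure relation; the paper substitutes it into the recurrence for $p_{n+1}$ and then uses \eqref{recurrence_anti1}, which is equivalent to your direct substitution into $\tilde p_{n+1}=(x-a_n)p_n-2b_np_{n-1}$. You then differentiate and remove $p_n''$ with the Jacobi differential equation, again as the paper does. Your explicit observation that $\sup_{|x|\le1}|A_n(x)|=O(1/n)$ is precisely what makes the coefficient of $p_n'(v^{\alpha,\beta})$ of order $1/n$, a point the paper leaves implicit in its expression for $S_n(x)$.
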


\begin{proof}
First, let us write formula (2.3.17) in~\cite{Mastroianni08} in terms of the monic polynomial $p_n$
\begin{equation}\label{stima1}
p_{n-1}(x)=\frac{1}{g_n k_{n-1}} \left[(1-x^2) k_n p'_n(x) -(e_n x+f_n) k_n p_n(x)\right], 
\end{equation}
where~\cite[p. 132-133]{Mastroianni08}
\begin{equation*}
\begin{aligned}
e_n &=-n,
& f_n&=\frac{n(\alpha-\beta)}{2n+\alpha+\beta},\\
g_n&=\frac{2(n+\alpha)(n+\beta)}{2n+\alpha+\beta},
& k_n&=\frac{(n+\alpha+\beta+1)_n}{2^n n!}.
\end{aligned}
\end{equation*}

By replacing \eqref{stima1} in \eqref{recurrence} we have
\begin{align*}
p_{n+1}(x)
&=-\frac{b_n k_n}{g_n k_{n-1}} (1-x^2) p'_n(x)\\
&\quad+\left[ x-a_n+\frac{b_n k_n}{g_n k_{n-1}} (e_n x+f_n) \right] p_n(x)\\
&=-\frac{1}{2n+\alpha+\beta+1} (1-x^2) p'_n(x)\\
&\quad+\left[ x-a_n-\frac{nx(2n+\alpha+\beta)-n(\alpha-\beta)}
{(2n+\alpha+\beta+1)(2n+\alpha+\beta)} \right] p_n(x).
\end{align*}
Therefore, from \eqref{recurrence_anti1} we get
\begin{equation*}
\tilde{p}_{n+1}(x)=-\frac{2}{2n+\alpha+\beta+1} (1-x^2) p'_n(x)+A_n(x) p_n(x),
\end{equation*}
from which one deduce \eqref{ag-structure-identity}.

Let us now prove \eqref{ag-structure-derivative}. 
Differentiating \eqref{ag-structure-identity} we obtain
\begin{align*}
\frac{\tilde{p}'_{n+1}(x)}{\sqrt{c_n}}
&=-\frac{2}{2n+\alpha+\beta+1}
\left[(1-x^2)p_n'(v^{\alpha,\beta},x)\right]'\\
&\quad+A_n(x) p'_n(v^{\alpha,\beta},x)
+A'_n(x) p_n(v^{\alpha,\beta},x).
\end{align*} 
Hence, using~\cite{Mastroianni08}
\[
\begin{aligned}
\left[(1-x^2) p'_n(v^{\alpha,\beta},x)\right]'
&=
\left(\alpha-\beta+(\alpha+\beta)x\right)
p'_n(v^{\alpha,\beta},x)\\
&\quad-
n(n+\alpha+\beta+1)p_n(v^{\alpha,\beta},x),
\end{aligned}
\]
we get
\begin{align*}
\frac{\tilde{p}'_{n+1}(x)}{\sqrt{c_n}}&=\frac{2n(n+\alpha+\beta+1)+\alpha+\beta+1}{2n+\alpha+\beta+1} p_n(v^{\alpha,\beta},x) +S_n(x) p'_n(v^{\alpha,\beta},x),
\end{align*} 
where
$$S_n(x)=-a_n+\frac{(\alpha-\beta)(-2(2n+\alpha+\beta)-n)}{(2n+\alpha+\beta+1)(2n+\alpha+\beta)}+x \left[1-\frac{2(\alpha+\beta)+2n}{2n+\alpha+\beta+1} \right].$$
Therefore, being $|x|\leq 1$ and $a_n =\mathcal{O}(n^{-2})$, one deduces estimate \eqref{ag-structure-derivative}. 
\end{proof}

\bigskip
\begin{proof}[Proof of Proposition~\ref{prop:nodes1}]
To get the assertion, we need to prove
\begin{equation}\label{rel}
1+\tilde{x}_1 \sim \frac{\C}{n^2}
\qquad \textrm{and} \qquad
1-\tilde{x}_{n+1} \sim \frac{\C}{n^2}.
\end{equation}

First, the interlacing property and \eqref{gaussdist} give
$$
1+\tilde{x}_1<1+{x}_1 \leq \frac{\C}{n^2}, \quad
\text{and} \quad
1-\tilde{x}_{n+1}<1-{x}_{n} \leq \frac{\C}{n^2}, \qquad \C\neq\C(n).
$$
It remains to prove the reverse inequalities. By Lagrange's theorem,
\begin{equation*}
|\tilde{p}_{n+1}(-1)|=|\tilde{p}_{n+1}(-1)-\tilde{p}_{n+1}(\tilde{x}_1)|  =(1+\tilde{x}_1)|\tilde{p}_{n+1}'(\zeta)|,
\qquad
\zeta \in (-1,\tilde{x}_1).
\end{equation*}
Hence,
\begin{equation}\label{dist1}
1+\tilde{x}_1
=
\frac{|\tilde{p}_{n+1}(-1)|}{|\tilde{p}_{n+1}'(\zeta)|},
\qquad
\zeta \in (-1,\tilde{x}_1).
\end{equation}
From \eqref{valuepol_anti2}, \eqref{stimapol}, and switching to orthonormal polynomials, we have
\begin{equation}\label{valuepol_anti2_left}
|\tilde{p}_{n+1}(-1)|
\sim
\frac{|p_n(-1)|}{n}
=
\sqrt{c_n}\frac{|p_n(v^{\alpha,\beta},-1)|}{n}
\sim
\sqrt{c_n}n^{\beta-\frac12}.
\end{equation}
We now estimate the derivative. Lemma~\ref{lem:ag-structure} together with \eqref{stimapol} and \eqref{derpn} gives
\begin{equation}\label{der-left-nodes1}
|\tilde p'_{n+1}(\zeta)|
\leq
\C\sqrt{c_n}n^{\beta+\frac32}.
\end{equation}
Combining \eqref{dist1}, \eqref{valuepol_anti2_left}, and \eqref{der-left-nodes1}, we obtain
$$
1+\tilde{x}_1
\geq
\C\frac{\sqrt{c_n}n^{\beta-\frac12}}
{\sqrt{c_n}n^{\beta+\frac32}}
=
\frac{\C}{n^2},
$$
which proves the first relation in \eqref{rel}. The second relation can be proved analogously.
\end{proof}

\medskip
In order to prove Proposition~\ref{prop:nodes3}, we need the following result stated in a general setting in~\cite[Theorem 2.6]{GSCS2015}.
\medskip
\begin{theorem}\label{arcsindistr} 
Let $\{X_n\}$ and $\{Y_n\}$ be two sequences of matrix and define $\{Z_n\} = \{X_n + Y_n\}$. Assume that each $X_n$ is Hermitian and its eigenvalues $x_{n,k} \in (-1,1)$ for each $k=1,\dots,n$, are characterized by an asymptotic arc-sine distribution, i.e.,
\[
\displaystyle \lim_{n\to\infty} \left[\frac{1}{n} \sum_{k=1}^{n} \psi(x_{n,k})\right] = \frac{1}{\pi} \int_{-1}^{1} \frac{\psi(x)}{\sqrt{1 - x^2}} \, dx,
\] 
for each continuous function $\psi$.
If, in addition, the following conditions are fulfilled 
\begin{enumerate}
 \item[(a)] the spectral norms $\|X_n\|$ and $\|Y_n\|$ are uniformly bounded with respect to $n$,
 \item[(b)] the trace-norm of $Y_{n}$, i.e., the sum of the singular values of $Y_{n}$, is $o(n)$,
\end{enumerate}
then $\{Z_n\}$  has the same asymptotic eigenvalue distribution as $\{X_n\}$.
\end{theorem}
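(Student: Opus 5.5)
The plan is to reduce the statement to a comparison of normalized traces of powers, $\frac1n\operatorname{tr}(Z_n^m)$ versus $\frac1n\operatorname{tr}(X_n^m)$, and then pass from moments to arbitrary continuous test functions $\psi$ by a density argument. Let $M:=\sup_n(\|X_n\|+\|Y_n\|)<\infty$, which is finite by (a). Then every eigenvalue of $X_n$ and of $Z_n$ lies in the closed disk $\{|z|\le M\}$. Since the arc-sine measure and the eigenvalue counting measures are all supported in a fixed compact set, it suffices by the Weierstrass theorem to check the limit for $\psi(x)=x^m$, $m\in\mathbb N$, at least when the eigenvalues of $Z_n$ are real. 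If the sizes differ by a fixed amount, as in the intended application to $\tilde J_{n+1}$ versus $J_n$, I would first pad $X_n$ with a zero row and column. This is a rank-one, norm-bounded change, absorbed into $Y_n$ without affecting (a) or (b).

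\textbf{Step 1 (moments).} Expand $(X_n+Y_n)^m-X_n^m$ as a sum of at most $2^m$ words, each containing at least one factor $Y_n$. By cyclicity and the inequality $|\operatorname{tr}(AYB)|\le\|A\|\,\|B\|\,\|Y\|_1$, each word has trace bounded by $M^{m-1}\|Y_n\|_1$. Hence
\[
\Bigl|\tfrac1n\operatorname{tr}(Z_n^m)-\tfrac1n\operatorname{tr}(X_n^m)\Bigr|\le \frac{2^mM^{m-1}\|Y_n\|_1}{n}\to0
\]
by (b). Since $\operatorname{tr}(Z_n^m)=\sum_k\lambda_k(Z_n)^m$ (via the Schur form) and $\frac1n\operatorname{tr}(X_n^m)$ converges to the $m$-th arc-sine moment, the eigenvalue moments of $Z_n$ converge to those of the arc-sine law. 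When $Z_n$ is Hermitian (in particular when $Y_n$ is Hermitian, which is the case for the modified Jacobi matrix), its eigenvalues lie in $[-M,M]$, and Weierstrass approximation of $\psi$ on $[-M,M]$ finishes the proof.

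\textbf{Step 2 (non-Hermitian $Y_n$).} This is the step I expect to be the main obstacle, since moments of complex eigenvalues only test holomorphic functions. I would show that the eigenvalues of $Z_n$ cluster on the real line in mean: with $\operatorname{Im}Z_n=(Y_n-Y_n^*)/(2i)$, the Schur triangularization gives $\sum_k|\operatorname{Im}\lambda_k(Z_n)|\le\|\operatorname{Im}Z_n\|_1\le\|Y_n\|_1=o(n)$ (a Weyl/Ky Fan-type majorization). Since $z\mapsto z^m$ is Lipschitz on $\{|z|\le M\}$ with constant $mM^{m-1}$, replacing $\lambda_k(Z_n)$ by $\operatorname{Re}\lambda_k(Z_n)\in[-M,M]$ changes $\frac1n\sum_k\lambda_k^m$ by $o(1)$. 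Therefore the real numbers $\operatorname{Re}\lambda_k(Z_n)$ have arc-sine limiting moments, and by Weierstrass they are arc-sine distributed. Finally, for any continuous $\psi$ on the disk, uniform continuity together with $\frac1n\sum_k|\operatorname{Im}\lambda_k|\to0$ transfers the conclusion from $\operatorname{Re}\lambda_k$ back to $\lambda_k(Z_n)$, which gives the claim.
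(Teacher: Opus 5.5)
Your proof is correct and complete for the statement as written, including non-Hermitian $Y_n$. The paper itself gives no argument for this theorem; it quotes it from \cite[Theorem~2.6]{GSCS2015}, where it is stated for general Hermitian sequences. So the comparison is between a citation and your self-contained proof.

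Your three ingredients are all elementary and suffice:
\begin{enumerate}
\item The bound $|\mathrm{tr}(AYB)|\le\|A\|\,\|B\|\,\|Y\|_1$ gives $\frac1n\mathrm{tr}(Z_n^m)-\frac1n\mathrm{tr}(X_n^m)=O(2^mM^{m-1}\|Y_n\|_1/n)\to0$.
\item The Schur form $Z_n=UTU^*$ gives $\sum_k|\mathrm{Im}\,\lambda_k(Z_n)|\le\|\mathrm{Im}\,T\|_1=\|\mathrm{Im}\,Z_n\|_1\le\|Y_n\|_1$. This holds because the diagonal of the Hermitian matrix $\mathrm{Im}\,T$ consists of the numbers $\mathrm{Im}\,\lambda_k$, and its $\ell^1$ norm is at most the trace norm.
\item Replacing $\lambda_k(Z_n)$ by $\mathrm{Re}\,\lambda_k(Z_n)$ lets you finish with the real Weierstrass theorem on $[-M,M]$. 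You then transfer back to the disk by uniform continuity and a count of the eigenvalues with $|\mathrm{Im}\,\lambda_k|\ge\eta$ (at most $\eta^{-1}\|Y_n\|_1=o(n)$ of them). No complex polynomial approximation is needed.
\end{enumerate}

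Your argument never uses the arc-sine density itself, so it proves the version with any limit measure supported on a compact real interval. The citation buys brevity and a general framework; your proof buys transparency.

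Two small remarks on the application. The perturbation $\Upsilon_{n+1}$ in the paper is Hermitian, so your Step 1 alone already suffices there. Also, the paper compares $\tilde J_{n+1}$ with $J_{n+1}$, which have the same size, so your padding aside is unnecessary. Had you padded $J_n$ instead, the difference $\tilde J_{n+1}-(J_n\oplus 0)$ would be rank two, not rank one, which is equally harmless.
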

\medskip

\begin{proof}[Proof of Proposition~\ref{prop:nodes3}]
Let us prove the first part of the proposition. We write $\tilde{J}_{n+1}$ as a perturbation of the Jacobian matrix $J_{n+1}$ as follows
\[
\begin{aligned}
\tilde{J}_{n+1}
&=\begin{bmatrix}
J_n & \sqrt{b_n}\mathbf{e}_n \\
\sqrt{b_n} \mathbf{e}^T_n & a_n \\
\end{bmatrix}\\
&\quad+\begin{bmatrix}
\mathbf{0} & (\sqrt{2}-1)\sqrt{b_{n}}\mathbf{e}_n \\
(\sqrt{2}-1)\sqrt{b_{n}} \mathbf{e}^T_n & 0 \\
\end{bmatrix}\\
&=J_{n+1}+\Upsilon_{n+1}.
\end{aligned}
\]
where $\mathbf{0}$ represents the null matrix of order $n$.

It is well known  that the matrix $J_{n+1}$ is Hermitian and its eigenvalues tend to an arc-sine distribution as $n \to \infty$; see, for instance,~\cite{cs2019} and \eqref{gaussdist}. Moreover, $\Upsilon_{n+1}$ is also a Hermitian matrix. Therefore, for both $J_{n+1}$ and $\Upsilon_{n+1}$, the singular values $\sigma_k(J_{n+1})$ and $\sigma_k(\Upsilon_{n+1})$ coincide with the absolute values of the corresponding eigenvalues $\lambda_k(J_{n+1})$ and $\lambda_k(\Upsilon_{n+1})$, respectively, that is,
\[
\begin{aligned}
\sigma_k(J_{n+1})&=|\lambda_k(J_{n+1})|,\\
\sigma_k(\Upsilon_{n+1})&=|\lambda_k(\Upsilon_{n+1})|,
\qquad k=1,\ldots,n+1.
\end{aligned}
\]
At this point,  the assertion follows by applying Theorem~\ref{arcsindistr} after noting that assumptions $(a)$ and $(b)$ are satisfied as shown below.
\begin{enumerate}
\item[(a)] The spectral norms of $J_{n+1}$ and $\Upsilon_{n+1}$ are uniformly bounded with respect to $n$. Indeed, the eigenvalues of $J_{n+1}$ are the nodes $x_k$ of the $(n+1)$-Gauss formula, which always live in $(-1,1)$. Consequently, the spectral norm 
$$
\|J_{n+1}\|_2 =\max{|\lambda_k(J_{n+1})|}=\max{|x_k|}< 1, \quad k=1,\ldots,n+1,
$$ 
is uniformly bounded with respect to $n$.
Moreover, by construction, the eigenvalues of $\Upsilon_{n+1}$ are given by
$$\lambda_1(\Upsilon_{n+1})=\dots=\lambda_{n-1}(\Upsilon_{n+1})=0,$$
$$\lambda_n(\Upsilon_{n+1})=(\sqrt{2}-1)\beta_n, \quad \lambda_{n+1}(\Upsilon_{n+1})=-(\sqrt{2}-1)\beta_n.$$  Therefore, taking into account that $\beta_n\to \dfrac{1}{4}$ as $n\to \infty$,  we deduce the uniformly boundedness of $\|\Upsilon_{n+1}\|_2$ with respect to $n$
$$
\|\Upsilon_{n+1}\|_2= \max{|\lambda_k(\Upsilon_{n+1})|}=(\sqrt{2}-1)\beta_n \to \dfrac{(\sqrt{2}-1)}{4} \quad \text{as} \quad n\to\infty.
$$
\item[(b)] The trace-norm of $\Upsilon_{n+1}$ is given by 
$ \displaystyle 
\sum_k \sigma_k(\Upsilon_{n+1})=2(\sqrt{2}-1)\beta_n.
$
Consequently, the trace-norm of $ \Upsilon_{n+1}$ is $o(n)$ being 
$$
\frac{2(\sqrt{2}-1)\beta_n}{n} \to 0 \quad \text{as} \quad n\to\infty. 
$$
\end{enumerate}

Let us now demonstrate the second part of the proposition. The proof of estimate $\Delta \tilde x_k \leq \mathcal{C}\frac{\varphi(\tilde x_k)}{n}$ is reported in~\cite[Lemma 1]{DFR2020}. It remains to prove
$$
\Delta \tilde x_k \geq \mathcal{C}\frac{\varphi(\tilde x_k)}{n}.
$$
By applying Lagrange's theorem, we have
$$
|\tilde p_{n+1}(x_k)-\tilde p_{n+1}(\tilde x_k)|=(\tilde x_k-x_k)|\tilde p'_{n+1}(\zeta)|, \quad \zeta \in [\tilde x_k,x_k].
$$
Taking into account relation \eqref{interlacing} and the fact that $\tilde p_{n+1}(\tilde x_k)=0$, we can write
$$
\tilde x_{k+1}-\tilde x_k>\tilde x_k-x_k=\frac{|\tilde p_{n+1}(x_k)|}{|\tilde p'_{n+1}(\zeta)|}.
$$
Now, from the definition of $p_{n+1}(x)$ in \eqref{recurrence}, it is easy to see that $p_{n+1}(x_k)$ and $p_{n-1}(x_k)$ have opposite signs for $k=1,\ldots,n$. Combining this observation  with \eqref{recurrence_anti2}, we obtain
$$
|\tilde p_{n+1}(x_k)| \geq |p_{n+1}(x_k)| 
$$
and hence, by \eqref{stimapol}  
$$
|\tilde p_{n+1}(x_k)| \geq \mathcal{C} \sqrt{c_n} (1-x_k)^{-\frac{\alpha}{2}-\frac{1}{4}}(1+x_k)^{-\frac{\beta}{2}-\frac{1}{4}}.
$$
Moreover, estimate \eqref{der_anti-pol} gives
$$
\frac{1}{|\tilde p'_{n+1}(\zeta)|} \geq \mathcal{C} \sqrt{c_n} n (1-\zeta)^{-\frac{\alpha+1}{2}-\frac{1}{4}}(1+\zeta)^{-\frac{\beta+1}{2}-\frac{1}{4}}.
$$
Therefore,
$$
\tilde x_{k+1}-\tilde x_k \geq \mathcal{C} \frac{(1-x_k)^{-\frac{\alpha}{2}-\frac{1}{4}}(1+x_k)^{-\frac{\beta}{2}-\frac{1}{4}}}{n\,(1-\zeta)^{-\frac{\alpha+1}{2}-\frac{1}{4}}(1+\zeta)^{-\frac{\beta+1}{2}-\frac{1}{4}}}.
$$
Now, since~\cite[Lemma 1]{DFR2020}  
$$1 \pm x_{k-1} \sim 1 \pm \zeta \sim 1 \pm x_{k}, \quad \zeta \in [\tilde x_k,x_k],$$
and
$$(1-x^2_k)^\frac{1}{2} \sim (1-\tilde x^2_{k+1})^\frac{1}{2} \sim (1-\tilde x^2_{k})^\frac{1}{2}, \quad \tilde x_{k+1} \in [x_k,x_{k+1}],$$ 
we obtain
$$
\tilde x_{k+1}-\tilde x_k>\frac{\sqrt{1-x_k^2}}{n} \sim \frac{\sqrt{1-\tilde x_k^2}}{n},
$$
which proves the assertion.
\end{proof}
\medskip

\begin{proof}[Proof of Proposition~\ref{prop:stimaptilde}]
First, let us note that by combining \eqref{derpn} with \eqref{stimapol} we have
\begin{equation*}
\begin{aligned}
|p_n'(v^{\alpha,\beta},x)|
&\leq
\C \sqrt{n(n+\alpha+\beta+1)}
(1-x)^{-\frac{\alpha}{2}-\frac{3}{4}}\\
&\quad\times
(1+x)^{-\frac{\beta}{2}-\frac{3}{4}},
\qquad x \in I_n.
\end{aligned}
\end{equation*}
Therefore, identity \eqref{ag-structure-identity} yields
\[
\begin{aligned}
\frac{|\tilde p_{n+1}(x)|}{\sqrt{c_n}}
&\le
\C \left[
(1-x)^{-\frac{\alpha}{2}+\frac{1}{4}}
(1+x)^{-\frac{\beta}{2}+\frac{1}{4}}\right.\\
&\qquad\left.+
|A_n(x)|(1-x)^{-\frac{\alpha}{2}-\frac{1}{4}}
(1+x)^{-\frac{\beta}{2}-\frac{1}{4}}
\right].
\end{aligned}
\]
At this point, observing that
$$A_n(x) \leq \frac{\C}{n} \leq \sqrt{1-x^2},$$
being $x \in I_n$, and by multiplying by the weight $u$ we have

\[
\frac{|\tilde p_{n+1}(x)u(x)|}{\sqrt{c_n}}
\le
\C (1-x)^{-\frac{\alpha}{2}+\frac{1}{4}+\gamma} (1+x)^{-\frac{\beta}{2}+\frac{1}{4}+\delta},\]
i.e., estimate \eqref{stimaptilde}.

Let us now prove \eqref{stimaderptilde}. Using \eqref{fpanti} evaluated at \(x=\tilde x_k\), together with
\eqref{lambatilde2}, we have
\begin{equation}\label{kernel-diagonal-derivative-patch}
\tilde p'_{n+1}(\tilde{x}_k)= 2 \sqrt{c_n} \frac{ \tilde K_n(v^{\alpha,\beta},\tilde{x}_k,\tilde{x}_k)}{p_n(v^{\alpha,\beta}, \tilde{x}_k)}.
\end{equation}
Now, note that by using the positivity of the  kernel $\tilde K_n(v^{\alpha,\beta})$, we have the following bracketing
\begin{equation*}
\frac12 K_n(v^{\alpha,\beta},x,x)
\leq \tilde K_n(v^{\alpha,\beta},x,x)
\leq K_n(v^{\alpha,\beta},x,x).
\end{equation*}
Consequently, 
\begin{equation}\label{modified-jacobi-diagonal-kernel-estimate}
\tilde K_n(v^{\alpha,\beta},t,t)
\sim
\frac{n}{v^{\alpha,\beta}(t)\varphi(t)},
\qquad t\in I_n,
\end{equation} 
since by combining \eqref{estimatelambda} with \eqref{lambda} we have
\begin{equation*}
K_n(v^{\alpha,\beta},t,t)
\sim
\frac{n}{v^{\alpha,\beta}(t)\varphi(t)},
\qquad t\in I_n.
\end{equation*}
Therefore, we deduce
\begin{equation}\label{tilde-kernel-lower-patch}
\tilde K_n(v^{\alpha,\beta},\tilde{x}_k,\tilde{x}_k)
\ge
\C n (1-\tilde{x}_k)^{-\alpha-\frac{1}{2}} (1+\tilde{x}_k)^{-\beta-\frac{1}{2}}.
\end{equation}
Moreover, by \eqref{stimapol} we have
\begin{equation}\label{stima3}
|p_n(v^{\alpha,\beta},\tilde{x}_k)| \leq \C  (1-\tilde{x}_k)^{-\frac{\alpha}{2}-\frac{1}{4}} (1+\tilde{x}_k)^{-\frac{\beta}{2}-\frac{1}{4}}.
\end{equation}
Hence, by replacing \eqref{tilde-kernel-lower-patch} and  \eqref{stima3} in \eqref{kernel-diagonal-derivative-patch} we obtain for $k=1,\dots,n+1$,
\begin{align*} 
\frac{1}{|\widehat p'_{n+1}(\tilde{x}_k)|u(\tilde{x}_k)}
& \le
\frac{\C}{n}(1-\tilde{x}_k)^{\frac{\alpha}{2}+\frac{1}{4}-\gamma} (1+\tilde{x}_k)^{\frac{\beta}{2}+\frac{1}{4}-\delta} \nonumber  \\ &=
\C\frac{\sqrt{1-\tilde{x}_k}}{n}(1-\tilde{x}_k)^{\frac{\alpha}{2}-\frac{1}{4}-\gamma} (1+\tilde{x}_k)^{\frac{\beta}{2}-\frac{1}{4}-\delta}.
\end{align*}
Finally, taking \eqref{distanza} into account, we deduce \eqref{stimaderptilde}.
\end{proof}

\medskip
In order to prove Theorem~\ref{th:stimaL}, we use the following consequence of the arc-sine summation estimate reported in~\cite[Lemma~4.1.1, Remark~4.1.1, and formula~(4.1.13)]{Mastroianni08}.
For the sake of completeness, we present it here along with a brief outline of the proof.
\medskip
\begin{lemma}\label{lemma:somma}
Let \(N\ge 2\) and let $
-1 \equiv z_0<z_1<\cdots<z_N<z_{N+1} \equiv1$ a set of nodes such that 
be such that
\[
\begin{aligned}
1+z_1\sim \frac1{N^2},
&\qquad
1-z_N\sim \frac1{N^2},
\\
\Delta z_k = z_{k+1}-z_k \sim \frac{\varphi(z_k)}{N},
\qquad &k=1,\ldots,N.
\end{aligned}
\]
Let \(z\in I_N=:=\left[-1+\C N^{-2},1-\C N^{-2}\right]\), and let \(d\) be the index of a point \(z_d\) closest to \(z\), i.e., $
|z-z_d|=\displaystyle \min_{1\leq k\leq N}|z-z_k|.$
Then, for every \(A,B\in[0,1]\),
\[
\sum_{\substack{k=1\\ k\neq d}}^{N}
\left(\frac{1-z}{1-z_k}\right)^A
\left(\frac{1+z}{1+z_k}\right)^B
\frac{\Delta z_k}{|z-z_k|}
\leq
\C\log N,
\qquad
\C\neq \C(N,z).
\]
\end{lemma}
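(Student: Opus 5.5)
The plan is the standard arc-sine summation argument: pass to the angle variable, split the sum into a near region and a far region around $z$, and compare the far part with an integral that produces the logarithm.

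Write $z=\cos\theta$ and $z_k=\cos\theta_k$ with $\theta,\theta_k\in[0,\pi]$. The hypotheses $\Delta z_k\sim\varphi(z_k)/N$, $1+z_1\sim N^{-2}$ and $1-z_N\sim N^{-2}$ are equivalent to $\theta_k-\theta_{k+1}\sim 1/N$, uniformly in $k$. Consequently $|k-d|\sim N|\theta-\theta_k|$ for $k\ne d$. We also have
\[
1-z_k\sim (\theta_k+N^{-1})^2,\qquad 1+z_k\sim(\pi-\theta_k+N^{-1})^2,
\]
and the same holds for $z\in I_N$. By the symmetry $z\mapsto -z$ it suffices to treat $z\ge0$. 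In that case the factor $\bigl((1+z)/(1+z_k)\bigr)^B$ is bounded whenever $z_k\ge -1/2$. When $z_k<-1/2$, we have $|z-z_k|\ge 1/2$, and that part of the sum is bounded by
\[
\C\sum_k(1+z_k)^{-B}\Delta z_k\le \C\int_{-1}^{1}(1+t)^{-B}\varphi(t)^{0}\,dt
\]
up to the discretization. This is finite because $B\le 1$: the summands behave like $(1+z_k)^{-B+1/2}/N$, and the sum is a Riemann-type sum of an integrable function.

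So the main term is
\[
\sum_{k\neq d,\ z_k\ge -1/2}\Bigl(\frac{1-z}{1-z_k}\Bigr)^A\frac{\Delta z_k}{|z-z_k|}.
\]
We split it into two ranges.

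\textbf{First range: $|z-z_k|\ge (1-z)/2$, or more precisely $\theta_k\ge2\theta$ or $\theta_k\le\theta/2$.}
\begin{itemize}
\item If $\theta_k\ge 2\theta$, then $1-z_k\ge 1-z$, so the ratio is at most $1$. Moreover $|z-z_k|\sim\theta_k^2$ and $\Delta z_k\sim\theta_k/N$. The terms are therefore $\lesssim 1/(N\theta_k)\sim 1/k'$ with $k'$ counting from the endpoint, and they sum to $\C\log N$.
\item If $\theta_k\le\theta/2$, then $|z-z_k|\sim\theta^2\sim 1-z$. Using $A\le1$, the term is at most
\[
\frac{(1-z)^{A}}{(1-z_k)^{A}}\cdot\frac{\theta_k/N}{\theta^2}\lesssim\frac{\theta^{2A-2}\,\theta_k^{1-2A}}{N}.
\]
Summing over the $\sim N\theta$ such nodes, with $\theta_k\gtrsim j/N$ for $j=1,\dots,N\theta$, the sum is comparable to $\theta^{2A-2}\int_{1/N}^{\theta}t^{1-2A}\,dt$. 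This is bounded when $A<1$ and is $\lesssim \log(N\theta)$ when $A=1$. The condition $A\le 1$ is used exactly here, and the $1/N$ cutoff coming from $1-z_1\sim N^{-2}$ controls the case $A=1$.
\end{itemize}

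\textbf{Second range: $\theta/2<\theta_k<2\theta$.} Here $1-z_k\sim1-z$, so the weight ratio is bounded. Also $|z-z_k|\sim\varphi(z)|\theta-\theta_k|$ and $\Delta z_k\sim\varphi(z)/N$. The terms are therefore $\sim 1/(N|\theta-\theta_k|)\sim 1/|k-d|$, and the harmonic sum over $k\ne d$ gives $\C\log N$.

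The main obstacle is the bookkeeping near the endpoint when $z$ is itself within $\C N^{-2}$ of $1$, that is, when $\theta\sim 1/N$. In that case the second range contains only $O(1)$ nodes. For those nodes we need $|z-z_k|\gtrsim\Delta z_k$ for $k\ne d$, which holds because $z_d$ is the node closest to $z$, so each of the other nodes lies at distance at least half a spacing. The assumptions $1\mp z_{N,1}\sim N^{-2}$ supply the uniform comparability $1-z\sim 1-z_k$ in this regime.
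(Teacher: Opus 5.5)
Your argument is correct in substance, but it takes a different route from the paper. The paper does not decompose the sum. It invokes the ready-made weighted estimate \cite[formula~(4.1.13)]{Mastroianni08} with exponents $-A,-B$. That estimate bounds $\sum_{k\neq d}\Delta z_k(1-z_k)^{-A}(1+z_k)^{-B}|z-z_k|^{-1}$ by $\C(\sqrt{1-z}+N^{-1})^{-2A}(\sqrt{1+z}+N^{-1})^{-2B}\log N$. The paper then multiplies by $(1-z)^A(1+z)^B$ and uses $(1-z)^A\le(\sqrt{1-z}+N^{-1})^{2A}$.

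You instead reprove the estimate from scratch in the angle variable:
\begin{itemize}
\item you reduce to $z\ge0$ and treat the far endpoint separately;
\item you split the remaining nodes into $\theta_k\ge2\theta$, $\theta_k\le\theta/2$ and $\theta/2<\theta_k<2\theta$;
\item the first and third ranges give harmonic sums, and the second gives $\theta^{2A-2}\int_{1/N}^{\theta}t^{1-2A}\,dt$.
\end{itemize}
The paper's route is a two-line reduction, but all the work, including the admissible range of exponents, stays inside the cited formula. Yours is self-contained and shows exactly where $A,B\le1$ enters: only for nodes lying between $z$ and the nearer endpoint, where the weight ratio exceeds $1$. It also shows that the endpoint hypotheses $1+z_1\sim N^{-2}$ and $1-z_N\sim N^{-2}$ supply the $1/N$ cutoff in the angle integrals, equivalently $\theta_k-\theta_{k+1}\sim1/N$ all the way to the endpoints.

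There is one local slip. For the nodes with $z_k<-1/2$ you claim the contribution is bounded ``because $B\le1$'', by comparison with $\int_{-1}^{1}(1+t)^{-B}\,dt$. That integral diverges at $B=1$. The discrete sum is then $\sum_j\frac{(j/N)N^{-1}}{(j/N)^2}\sim\log N$, not $O(1)$. This does not affect the conclusion, since $\C\log N$ is all you need. The fix is the same cutoff argument you already give for $A=1$, namely $\pi-\theta_k\gtrsim1/N$ from $1+z_1\sim N^{-2}$. In two places you also write $1-z_1\sim N^{-2}$ and $1\mp z_{N,1}$ where you mean $1-z_N\sim N^{-2}$ and $1+z_1\sim N^{-2}$.
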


\begin{proof}
The result follows from~\cite[formula~(4.1.13)]{Mastroianni08} with
\(\gamma=-A\) and \(\delta=-B\), since \(A,B\in[0,1]\). Indeed, that formula yields
\[
\begin{aligned}
\sum_{\substack{k=1\\ k\neq d}}^{N}
\frac{\Delta z_k}{(1-z_k)^A\!(1+z_k)^B\!|z-z_k|}
&\le
\C
\left(\sqrt{1-z}+N^{-1}\right)^{-2A}
\left(\sqrt{1+z}+N^{-1}\right)^{-2B}\\
&\quad\times\log N .
\end{aligned}
\]
Multiplying by \((1-z)^A(1+z)^B\) and using
\[
(1-z)^A\leq \left(\sqrt{1-z}+N^{-1}\right)^{2A},
\qquad
(1+z)^B\leq \left(\sqrt{1+z}+N^{-1}\right)^{2B},
\]
gives the claimed estimate.
\end{proof}
\medskip

\begin{proof}[Proof of Theorem~\ref{th:stimaL}]
First, note that by Remez inequality~\cite{Mastroianni08} 
(see also~\cite{Fermo09})
\[
\left\|\tilde{L}_{n+1}(v^{\alpha,\beta},f)\right\|_{C_u}
\leq
\C
\max_{x \in I_n}
\left|
\tilde{L}_{n+1}(v^{\alpha,\beta},f,x)u(x)
\right|,
\qquad
\C \neq \C(n).
\]
Under assumption \eqref{condgammadelta}, one has
\[
0\leq \gamma-\frac{\alpha}{2}+\frac{1}{4}\leq1,
\qquad
0\leq \delta-\frac{\beta}{2}+\frac{1}{4}\leq1.
\]

Now, by definition \eqref{polanti}, and denoting by \(d\) the index of the node
\(\tilde x_d\) closest to \(x\), we have
\begin{align}\label{stima1L}
\left|
\tilde L_{n+1}(v^{\alpha,\beta},f,x)u(x)
\right|
&\leq
\|fu\|_\infty
\left[
\sum_{\substack{k=1\\ k\neq d}}^{n+1}
|\tilde \ell_k(x)|\frac{u(x)}{u(\tilde x_k)}
+
|\tilde \ell_d(x)|\frac{u(x)}{u(\tilde x_d)}
\right].
\end{align}

Let us estimate the sum at the right hand side of \eqref{stima1L}. Since the
fundamental polynomials are invariant under normalization, by \eqref{polfon} and
Proposition~\ref{prop:stimaptilde}, for \(x\in I_n\), we can write
\begin{align*}
\sum_{\substack{k=1\\ k\neq d}}^{n+1}
|\tilde \ell_k(x)|\frac{u(x)}{u(\tilde x_k)}
&=
\sum_{\substack{k=1\\ k\neq d}}^{n+1}
\frac{
|\widehat p_{n+1}(x)|u(x)
}{
|\widehat p'_{n+1}(\tilde x_k)|u(\tilde x_k)|x-\tilde x_k|
}
\\
&\leq
\C
\sum_{\substack{k=1\\ k\neq d}}^{n+1}
\left(\frac{1-x}{1-\tilde x_k}\right)^{-\frac{\alpha}{2}+\frac{1}{4}+\gamma}
\left(\frac{1+x}{1+\tilde x_k}\right)^{-\frac{\beta}{2}+\frac{1}{4}+\delta}
\frac{\Delta\tilde x_k}{|x-\tilde x_k|}.
\end{align*}

By Proposition~\ref{prop:nodes1} and Proposition~\ref{prop:nodes3}, the anti-Gauss nodes $\tilde x_k$ satisfy the assumptions of Lemma~\ref{lemma:somma}, with \(N=n+1\). Hence, under the assumption \eqref{condgammadelta}, we have
\[
\sum_{\substack{k=1\\ k\neq d}}^{n+1}
|\tilde \ell_k(x)|\frac{u(x)}{u(\tilde x_k)}
\leq
\C\log n.
\]

Let us now estimate the second term in \eqref{stima1L}. By the modified kernel
representation \eqref{fpanti} and by the identity
\(\tilde \ell_d(\tilde x_d)=1\), we have
\[
\tilde \ell_d(x)
=
\frac{
\tilde K_n(v^{\alpha,\beta},x,\tilde x_d)
}{
\tilde K_n(v^{\alpha,\beta},\tilde x_d,\tilde x_d)
}.
\]
Since \(\tilde K_n\) is positive definite, we have the Cauchy inequality
\[
|\tilde K_n(v^{\alpha,\beta},x,\tilde x_d)|^2
\leq
\tilde K_n(v^{\alpha,\beta},x,x)
\tilde K_n(v^{\alpha,\beta},\tilde x_d,\tilde x_d).
\]
Thus
\[
|\tilde \ell_d(x)|
\leq
\left(
\frac{
\tilde K_n(v^{\alpha,\beta},x,x)
}{
\tilde K_n(v^{\alpha,\beta},\tilde x_d,\tilde x_d)
}
\right)^{1/2}.
\]
Since $x,\tilde x_d\in I_n$, estimate \eqref{modified-jacobi-diagonal-kernel-estimate}
gives
\[
|\tilde \ell_d(x)|
\leq
\C
\left(
\frac{v^{\alpha,\beta}(\tilde x_d)\varphi(\tilde x_d)}
{v^{\alpha,\beta}(x)\varphi(x)}
\right)^{1/2}.
\]

Hence, since $1\pm x\sim 1\pm \tilde x_d,$ implies
\[
v^{\alpha,\beta}(x)\varphi(x)
\sim
v^{\alpha,\beta}(\tilde x_d)\varphi(\tilde x_d),
\qquad
u(x)\sim u(\tilde x_d),
\]
one deduce
\[
|\tilde \ell_d(x)|\frac{u(x)}{u(\tilde x_d)}
\leq
\C.
\]

Combining the two estimates in \eqref{stima1L}, we obtain
\[
\max_{x\in I_n}
\left|
\tilde L_{n+1}(v^{\alpha,\beta},f,x)u(x)
\right|
\leq
\C\log n\,\|f\|_{C_u}
\]
that proves the assertion.
\end{proof}

\medskip
\begin{proof}[Proof of Theorem~\ref{th:convergence}]
Let $P_n\in \mathbb{P}_n$ be the best polynomial approximation of $f$ in $C_u$. Since $\tilde L_{n+1}(v^{\alpha,\beta},P_n)=P_n$,
\begin{align*}
|[f(x)-\tilde L_{n+1}(v^{\alpha,\beta},f,x)]u(x)|  
&\leq |[f(x)-P_n(x)]u(x)|\\
&\quad+|\tilde L_{n+1}(v^{\alpha,\beta},P_n-f,x) u(x)|\\
&\leq \C (1 + \tilde{\Lambda}_{n+1}(v^{\alpha,\beta}))
E_{n}(f)_{u}. 
\end{align*}
Hence, by taking into account Theorem~\ref{th:stimaL} we get the assertion.
Finally, \eqref{estimateWr} follows by using estimate \eqref{stimaWr}.
\end{proof}

\section{Conclusions}\label{sec:conclusions}
In this paper, we investigated Lagrange interpolation processes based on the zeros of anti-Gauss Jacobi polynomials. Starting from the anti-Gauss polynomials associated with Jacobi weights, we established several new theoretical properties that are fundamental for approximation purposes.

In particular, assuming the anti-Gauss nodes are internal to the interval (-1,1), we proved that their distribution follows the arc-sine law and that the spacing between consecutive nodes exhibits the same asymptotic behavior as that of classical Gauss-Jacobi nodes. We also derived estimates for the normalized anti-Gauss Jacobi polynomials and their derivatives, providing the key ingredients required for the analysis of interpolation processes.

The obtained results allowed us to introduce a new Lagrange interpolation operator based on anti-Gauss nodes and  determine conditions ensuring that the weighted Lebesgue constants associated to the new  process behave with logarithmic growth. 
Consequently, the proposed interpolation process is optimal in the classical sense. An important feature of the proposed scheme is that  this optimality is achieved  for a different range of endpoint weight parameters than the classical Lagrange interpolation process based on Jacobi nodes. Taken together, the two interpolation schemes provide optimal Lebesgue constant estimates over a wider range of weighted spaces.

Convergence estimates were also established for functions belonging to Sobolev subspaces of weighted continuous functions spaces.

Numerical experiments confirm the theoretical findings.   The comparison with the classical Gauss-Jacobi interpolation process shows  that the anti-Gauss based interpolant provides a competitive and, in several cases, more accurate approximation. 

Future research may concern the extension of these results to other families of anti-Gauss orthogonal polynomials, the construction of multidimensional interpolation schemes, and the development of efficient numerical methods for integral and functional equations based on the interpolation processes introduced in this work.

\section*{Acknowledgements}
This research has been accomplished within “Research ITalian network on Approximation” (RITA). The authors are members of the Gruppo Nazionale Calcolo Scientifico-Istituto Nazionale di Alta Matematica (GNCS-INdAM).
L. Fermo and D. Occorsio are also members of the TAA-UMI Research Group, and the SIMAI Activity Group ``Numerical and Analytical Approximation of Data and Functions with Applications''.

P. D\'iaz de Alba, L. Fermo, and D. Occorsio are partially supported by INdAM-GNCS 2026 project ``Metodi numerici per modelli integrali e dinamiche con memoria''. L. Fermo is also partially supported by Fondazione di Sardegna biennial project 2024-2025 ``Integral and Discrete Inverse Problems (InDIP)''. V. Loi is partially supported by INdAM-GNCS 2026 project ``Metodi strutturati per il signal processing avanzato''.

\bibliographystyle{amsplain}
\bibliography{biblio}

\end{document}